
\documentclass{article}
\usepackage[english]{babel}
\usepackage[utf8]{inputenc}
\usepackage{johd}

\usepackage{lineno,hyperref}
\usepackage[left=3.5cm,top=2.5cm,right=3.5cm,bottom=2.5cm]{geometry}
\usepackage{graphicx,array,delarray}
\usepackage{subfigure}
\usepackage{latexsym}
\usepackage{amscd}
\usepackage{amsfonts, amsmath, amssymb}
\usepackage[utf8]{inputenc}
\usepackage{verbatim}
\usepackage{mathrsfs}
\usepackage{fancyhdr}
\usepackage{float}
\usepackage{palatino}
\usepackage{mathtools}
\usepackage{amsmath}
\usepackage{amssymb}

\modulolinenumbers[5]

\newtheorem{theorem}{\textbf{Theorem}}

\newtheorem{definition}[theorem]{\textbf{Definition}}

\newtheorem{proposition}[theorem]{\textbf{Proposition}}
\newtheorem{remark}[theorem]{Remark}

\numberwithin{equation}{section}

\newenvironment{proof}[1][Proof]{\noindent\textbf{#1.} }{\ \rule{0.5em}{0.5em}}

\title{Generalized Reproducing Kernel Banach Spaces: A Functional Analytic Framework for Abstract Neural Networks}

\author{Raul Felipe-Sosa$^{a}$$^{,*}$ \\
        \small $^{a}$Universidad Antonio de Nebrija, Madrid, Spain \\
        \small $^{*}$Corresponding author: R. Felipe-Sosa; \tt{rfelipe@nebrija.es} \\
}
\date{}

\begin{document}
\maketitle

\begin{abstract}
\noindent
In this paper, we introduce a generalization of Reproducing Kernel Banach Spaces (RKBS), 
which we term \emph{Generalized Reproducing Kernel Banach Spaces} (GRKBS). 
The motivation stems from recent results showing that classical fully connected neural networks 
can be understood as finite-dimensional subspaces of RKBS. 
Our generalization extends this perspective to settings with Banach-valued codomains, 
allowing the construction of \emph{abstract neural networks} (AbsNN) as compositions of GRKBS. 
This framework provides a natural pathway to model neural architectures that go beyond classical machine learning paradigms, including physically-informed structures governed by differential equations. 
We establish a unified definition of GRKBS, prove structural uniqueness results, 
and analyze the existence of sparse minimizers for the corresponding abstract training problem. 
This contributes to bridging functional analytic theory and the design of new neural architectures with applications in 
both approximation theory and mathematical modeling.
\end{abstract}

\noindent\keywords{generalized RKBS; abstract training problem; abstract neural structures}

\section{Introduction}

The interplay between functional analysis and machine learning has gained increasing attention in recent years. 
In particular, Reproducing Kernel Hilbert Spaces (RKHS) have long provided a solid theoretical foundation 
for kernel methods and statistical learning theory, see \cite{Hofmann2008, Kanagawa2025}. More recently, the concept of Reproducing Kernel Banach Spaces (RKBS) 
has been developed \cite{Xu2019, Bertolucci2023}, extending the RKHS framework to Banach space settings. 
A key insight from this line of research is that fully connected neural networks can be interpreted as finite-dimensional 
Banach subspaces of suitable RKBS, thereby endowing neural architectures with a rigorous functional-analytic interpretation.

Despite these advances, the classical theory of RKBS remains limited in scope. 
In particular, most formulations restrict to scalar-valued function spaces, 
which constrains their applicability to modern architectures that operate in structured, often infinite-dimensional, codomains. 
Moreover, existing definitions of RKBS coexist without a fully unified framework, which complicates both theoretical analysis 
and practical implementation.

The aim of this paper is to address these limitations by introducing the notion of 
\emph{Generalized Reproducing Kernel Banach Spaces} (GRKBS). 
Within this setting, we define \emph{abstract neural networks} (AbsNN) as suitable compositions of GRKBS, 
generalizing the construction of deep architectures beyond classical feedforward models. 
This approach makes it possible to embed physical models—such as those governed by differential equations—into the neural structure, 
yielding architectures that are qualitatively different from standard ones.

Our contributions can be summarized as follows:
\begin{itemize}
\item We introduce a generalized and unified definition of RKBS, extending it to Banach-valued codomains.
\item We establish what we regard as the configuration of a GRKBS, showing that a GRKBS may admit infinitely many configurations grouped into equivalence classes, based on an equivalence relation defined on the set of configurations. 
\item In the generalized framework we propose, we unify the existing definitions of RKBS. In this sense, we prove that a reproducing kernel for a GRKBS corresponds to a particular configuration of the space.
\item We present uniqueness results for the reproducing kernel of a GRKBS.
\item We define abstract neural networks as compositions of GRKBS and provide a functional-analytic description of multi-layer structures.
\item We analyze the abstract training problem (ATP) in this setting and prove the existence of sparse minimizers, following ideas from variational analysis \cite{Bredies2020}.
\end{itemize}

This work contributes to the ongoing effort of building a rigorous mathematical foundation for neural networks 
and opens the door to new architectures where learning is intertwined with physical and functional-analytic models.

The structure of the paper is as follows. 
In Section~\ref{secc1} we present the main definitions and preliminary results, 
highlighting the connection between classical RKBS and neural networks. 
Section~\ref{sec:GRKBS} introduces the concept of Generalized Reproducing Kernel Banach Spaces (GRKBS) 
and establishes their basic properties, including uniqueness results and the construction of reproducing kernels. 
In Section~\ref{sec:AbsNN}, we define abstract neural networks (AbsNN) as compositions of GRKBS 
and analyze how this framework naturally leads to multi-layer architectures. 
Finally, in Section~\ref{sec:ATP} is devoted to the study of the abstract training problem in GRKBS and AbsNN, 
where we prove the existence of sparse minimizers by leveraging tools from variational analysis. 

\section{Some Definitions and Preliminary Results}\label{secc1}
This section aims to present essential definitions and results that are crucial for the development of this paper. In particular, we focus on presenting a theoretical framework in which the training process of a deep abstract neural structure can be explicitly formulated using given training data. Additionally, we briefly discuss the connection between reproducing kernel Banach spaces and fully connected neural networks, a relationship that has been previously explored in the literature; see \cite{Bertolucci2023} and \cite{Long2022}.

In the context of this work, $\mathcal{X}$ will be treated as an arbitrary set, with the subsets $\left\{x_i\right\}^N_{i = 1} \subset \mathcal{X}$ and $\left\{y_i\right\}^N_{i = 1} \subset \mathbb{R}$ referred to as training data. It is important to note that $\mathcal{X}$ is not assumed to have a geometric or topological structure. For instance, it could be a topological space, a metric space, or similar, which would provide a different approach to studying the spaces we will later define, although this is not the focus of our investigation.

Additionally, we will consider a function $\mathcal{L}: \mathbb{R}^2 \rightarrow \mathbb{R}$, which adheres to specific properties that will be defined subsequently. Within the realm of machine learning, $\mathcal{L}$ is referred to as a cost function.

Let's give the following definition.

\begin{definition}\label{def1}
Let $\mathcal{B}$ be a Banach space consisting of functions $f: \mathcal{X} \rightarrow \mathbb{R}$. We refer to the \textbf{abstract training problem} (ATP) associated with $\mathcal{B}$ and the training dataset $\left\{x_i\right\}_{i = 1}^{N} \subset \mathcal{X}$ and $\left\{y_i\right\}_{i = 1}^{N} \subset \mathbb{R}$ as:
\begin{align}\label{eqn1}
\inf_{f \in \mathcal{B}}\left(\frac{1}{N} \sum_{i = 1}^{N} \mathcal{L}(y_i, f(x_i)) + \|f\|_{\mathcal{B}}\right).
\end{align}
\end{definition}

Definition \ref{def1} provides a formal structure for the concept of model training in machine learning. In particular, if we take $\mathcal{X} = \mathbb{R}^n$ and consider the class of functions $f_{\mathcal{N}}: \mathbb{R}^{n} \rightarrow \mathbb{R}$ given by
\begin{align*}
f_{\mathcal{N}}(x) = \sum_{k = 1}^{K} \alpha_k \sigma(w_k \cdot x + b_k),
\end{align*}
where $\alpha_k, b_k \in \mathbb{R}$, $w_k \in \mathbb{R}^n$, and $\cdot$ denotes the standard inner product in $\mathbb{R}^n$, and endow this function class with a norm that turns it into a Banach space, then solving the abstract training problem corresponds to training a neural network with a single hidden layer of up to $N$ neurons and an input layer of $n$ neurons. Thus, a neural network model can be viewed as a Banach space in which an abstract training problem of the form \eqref{eqn1} is posed and solved.

The final point discussed in the preceding paragraph arises due to the connection between what are known as reproducing kernel Banach spaces (RKBS) and neural networks. RKBS extend the concept of reproducing kernel spaces. In general, these spaces are Hilbert spaces characterized by the presence of a reproducing kernel that can, in some manner, reproduce every function within the space. The following definition introduces what is known as a reproducing kernel Banach space (refer to Definition 3.2 of \cite{Bertolucci2023}).

\begin{definition}\label{def2}
Consider $\mathcal{B}$ as a Banach space of functions $f: \mathcal{X} \rightarrow \mathbb{R}$, where the operations of addition and scalar multiplication are conducted pointwise. The space $\mathcal{B}$ is classified as a \textbf{\textit{reproducing kernel Banach space}} (RKBS) if, for each $x \in \mathcal{X}$, the evaluation linear functional that assigns to every $f \in \mathcal{B}$ the specific value $f(x)$, denoted by $ev_x$, is both linear and continuous, that is, by $ev_{x} \in \mathcal{B}'$.
\end{definition}
Note that Definition \ref{def2} does not explicitly reference any component of a reproducing kernel. However, as shown in the following proposition, the reproducing property is inferable from the so-called characteristic space and characteristic map. Proposition 3.3 from \cite{Bertolucci2023} offers another definition of RKBS, effectively serving as a guideline to build such spaces starting from the characteristic space and characteristic map. Let us examine this result.

\begin{proposition}\label{prop1}
Let $\mathcal{B}$ be a Banach space of functions as in Definition \ref{def2}. $\mathcal{B}$ is an RKBS if and only if there exist a Banach space $\mathcal{F}$ and a map $\phi: \mathcal{X} \rightarrow \mathcal{F}'$ such that the following conditions are satisfied:
\begin{align*}
&\mathcal{B} = \left\{f_{\mu}: \mu \in \mathcal{F}\right\}, \;\; \text{and}\;\; f_{\mu}(x) = \prescript{}{\mathcal{F}'}{\left\langle \phi(x), \mu\right\rangle}_{\mathcal{F}},\\
&\|f\|_{\mathcal{B}} = \inf\left\{\|\mu\|_{\mathcal{F}}: f = f_{\mu}\right\}.
\end{align*}
\end{proposition}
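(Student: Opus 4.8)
The plan is to prove both directions of the equivalence, treating Proposition~\ref{prop1} as a reformulation of Definition~\ref{def2} in terms of a predual pairing.

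\textbf{Necessity.} Suppose $\mathcal{B}$ is an RKBS, so every evaluation functional $ev_x$ lies in $\mathcal{B}'$. The natural candidate is to take $\mathcal{F} = \mathcal{B}'$, so that $\mathcal{F}' = \mathcal{B}''$, and then embed $\mathcal{B}$ into its bidual via the canonical isometry $J : \mathcal{B} \hookrightarrow \mathcal{B}''$. One sets $\phi(x) := J(?)$ — more precisely, I would define $\phi : \mathcal{X} \to \mathcal{F}' = \mathcal{B}''$ by letting $\phi(x)$ be the functional on $\mathcal{F} = \mathcal{B}'$ given by $g \mapsto g(ev_x)$ once we identify; but the cleaner route is to note that $ev_x \in \mathcal{B}'$ already, and to use $\mathcal{F} = \mathcal{B}'$ with the pairing arranged so that $f_\mu$ for $\mu \in \mathcal{B}'$ is recovered. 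The subtlety here is which space plays the role of $\mathcal{F}$: we want $f_\mu(x) = {}_{\mathcal{F}'}\langle \phi(x), \mu\rangle_{\mathcal{F}}$ to reproduce $f$ itself when $\mu$ is chosen appropriately. The correct identification is to take $\mathcal{F}$ to be a predual-type space; following the cited construction, one takes $\mathcal{F} = \mathcal{B}'$, defines $\phi(x) = ev_x \in \mathcal{B}' $ viewed inside $\mathcal{F}' = \mathcal{B}''$ through $J^{-1}$ is not available in general, so instead one observes that the map $\iota : \mathcal{B} \to \mathcal{B}''=\mathcal{F}'$ is an isometry onto its image, and for $\mu = \iota(f)$ we get ${}_{\mathcal{F}'}\langle \phi(x), \mu \rangle_{\mathcal{F}} = \langle ev_x, f\rangle_{\mathcal{B}',\mathcal{B}} = f(x)$. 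Thus $\mathcal{B} = \{ f_\mu : \mu \in \iota(\mathcal{B})\}$; to get all of $\mathcal{F}$ one restricts $\mathcal{F}$ to $\iota(\mathcal{B})$, or equivalently notes that distinct $\mu$ outside $\iota(\mathcal{B})$ can still define functions $f_\mu$, and the infimum in the norm formula collapses because $\iota$ is isometric: $\|f\|_{\mathcal{B}} = \|\iota(f)\|_{\mathcal{B}''} = \inf\{\|\mu\|_{\mathcal{F}} : f_\mu = f\}$, the infimum being attained at $\mu=\iota(f)$ and no smaller $\mu$ can represent $f$ by Hahn–Banach.

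\textbf{Sufficiency.} Conversely, assume such $\mathcal{F}$ and $\phi$ exist. I would first check that $\mathcal{B} = \{f_\mu : \mu \in \mathcal{F}\}$, with the quotient-type norm $\|f\|_{\mathcal{B}} = \inf\{\|\mu\|_{\mathcal{F}} : f = f_\mu\}$, is genuinely a Banach space: this is the standard fact that if $Q : \mathcal{F} \to \mathcal{B}$, $Q(\mu) = f_\mu$, is a linear surjection and $\mathcal{B}$ is equipped with the infimal (quotient) norm, then $\mathcal{B} \cong \mathcal{F}/\ker Q$ is complete. Then for fixed $x$, linearity of $ev_x$ is immediate from the linearity of $\mu \mapsto {}_{\mathcal{F}'}\langle \phi(x), \mu\rangle_{\mathcal{F}}$ in $\mu$ and the fact that addition and scalar multiplication in $\mathcal{B}$ are pointwise. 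For continuity, estimate
\begin{align*}
|ev_x(f_\mu)| = |{}_{\mathcal{F}'}\langle \phi(x), \mu\rangle_{\mathcal{F}}| \le \|\phi(x)\|_{\mathcal{F}'}\, \|\mu\|_{\mathcal{F}},
\end{align*}
and take the infimum over all $\mu$ with $f = f_\mu$ to obtain $|f(x)| \le \|\phi(x)\|_{\mathcal{F}'}\, \|f\|_{\mathcal{B}}$, so $ev_x \in \mathcal{B}'$ with $\|ev_x\|_{\mathcal{B}'} \le \|\phi(x)\|_{\mathcal{F}'}$. Hence $\mathcal{B}$ is an RKBS.

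\textbf{Main obstacle.} The routine parts are the quotient-norm completeness and the continuity estimate; the delicate point is the necessity direction, specifically pinning down the right choice of $(\mathcal{F}, \phi)$ and verifying that the infimum in the norm formula reproduces $\|\cdot\|_{\mathcal{B}}$ exactly rather than merely bounding it. The key tool there is the canonical isometric embedding $\mathcal{B} \hookrightarrow \mathcal{B}''$ together with a Hahn–Banach argument showing that no representer $\mu$ with $\|\mu\|_{\mathcal{F}} < \|f\|_{\mathcal{B}}$ can exist; one must also be slightly careful that $\phi(x) = ev_x$ indeed lands in $\mathcal{F}'$ under the chosen identification, which is exactly where Definition~\ref{def2} (continuity of $ev_x$) is used.
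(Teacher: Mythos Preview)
Your sufficiency direction is fine and matches the paper's argument for the generalized version (Proposition~\ref{prop2}): quotient $\mathcal{F}$ by $\ker(\mu \mapsto f_\mu)$ to see that $\mathcal{B}$ is complete, then bound $|ev_x(f_\mu)| \le \|\phi(x)\|_{\mathcal{F}'}\|\mu\|_{\mathcal{F}}$ and pass to the infimum over representers.

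The necessity direction, however, is genuinely tangled, and the tangle comes from an unfortunate choice of $\mathcal{F}$. You set $\mathcal{F} = \mathcal{B}'$, so that $\mu$ ranges over $\mathcal{B}'$ and $\phi(x)$ should live in $\mathcal{F}' = \mathcal{B}''$. But then you write ``for $\mu = \iota(f)$'' with $\iota : \mathcal{B} \to \mathcal{B}''$ the canonical embedding; that puts $\mu$ in $\mathcal{B}''$, not in $\mathcal{F} = \mathcal{B}'$, so the pairing ${}_{\mathcal{F}'}\langle \phi(x),\mu\rangle_{\mathcal{F}}$ is not even type-correct. The preceding sentence about viewing $ev_x \in \mathcal{B}'$ ``inside $\mathcal{F}' = \mathcal{B}''$'' has the same problem: there is no canonical embedding $\mathcal{B}' \hookrightarrow \mathcal{B}''$ to invoke. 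Your own remark that ``$J^{-1}$ is not available in general'' is a symptom that the identification you want does not exist. Even if one patched the types, with $\mathcal{F}=\mathcal{B}'$ there is no reason the set $\{f_\mu : \mu \in \mathcal{B}'\}$ should coincide with $\mathcal{B}$, so the first displayed condition would fail.

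The fix is much simpler than anything involving biduals or Hahn--Banach, and it is exactly what the paper does in the proof of Proposition~\ref{prop2} (the $E$-valued generalization, which specializes to Proposition~\ref{prop1} when $E=\mathbb{R}$): take $\mathcal{F} = \mathcal{B}$ itself and $\phi(x) = ev_x \in \mathcal{B}' = \mathcal{F}'$. Then for $\mu \in \mathcal{F} = \mathcal{B}$ one has $f_\mu(x) = {}_{\mathcal{B}'}\langle ev_x, \mu\rangle_{\mathcal{B}} = \mu(x)$, i.e.\ $f_\mu = \mu$, so $\{f_\mu : \mu \in \mathcal{F}\} = \mathcal{B}$ trivially, and $\inf\{\|\mu\|_{\mathcal{F}} : f_\mu = f\} = \|f\|_{\mathcal{B}}$ because the infimum is over the singleton $\{f\}$. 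No bidual, no Hahn--Banach, no infimum subtlety; the ``delicate point'' you flag as the main obstacle disappears entirely with the right choice of $\mathcal{F}$.
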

Note that Proposition \ref{prop1} indicates that, with a given $\mathcal{F}$ and $\phi$, the corresponding RKBS is uniquely specified. Thus, this proposition offers a methodology for constructing an RKBS.

In what follows, we present an example of how to construct an RKBS from a given Banach space $\mathcal{F}$ and a map $\phi$. This example can be found in \cite{Bertolucci2023}.

Let us consider $\mathcal{X} = \mathbb{R}^n$ with $\Theta = \mathbb{R}^{n + 1}$. Define $\mathcal{M}(\Theta)$ as the collection of bounded measures $\mu$ over the Borel $\sigma$-algebra of $\Theta$. Using the total variation norm $\|\cdot\|_{TV}$, $\mathcal{M}(\Theta)$ serves as a Banach space, and we set $\mathcal{F} = \mathcal{M}(\Theta)$. To define the function $\phi$, we introduce two functions $\rho: \mathcal{X}\times \Theta \rightarrow \mathbb{R}$ and $\beta: \Theta \rightarrow \mathbb{R}$, which adhere to these conditions:,\begin{itemize}
    \item For all $x \in \mathcal{X}$, it holds that $\rho(x, \cdot)\beta(\cdot) \in C_0(\Theta),$ where $C_0(\Theta)$ consists of continuous functions on $\Theta$ that tend to zero at infinity,
    \item For every $x \in \mathcal{X}, \rho(x, \cdot)$ must be measurable.
\end{itemize}
Next, we define $\phi: \mathcal{X} \rightarrow \mathcal{M}(\Theta)'$ by specifying that
\begin{align}\label{phiBN}
\prescript{}{\mathcal{M}(\Theta)'}{\left\langle \phi(x), \mu\right\rangle}_{\mathcal{M}(\Theta)} := \int_{\Theta}\rho(x, \theta)\beta(\theta)d\mu(\theta),\;\; \forall \mu \in \mathcal{M}(\Theta), x \in \mathcal{X}.
\end{align}
Thus, the function $\phi$ is established.

The conditions imposed on the functions $\rho$ and $\beta$ ensure that the integral in the definition of $\phi$ exists. 

Let $\mathcal{B}_{\mathcal{N}\mathcal{N}}$ denote the RKBS associated with $\mathcal{F} = \mathcal{M}(\Theta)$ and the map $\phi$ constructed above. Then,
\begin{align*}
\mathcal{B}_{\mathcal{N}\mathcal{N}} &= \left\{ f_{\mu} : \mu \in \mathcal{M}(\Theta) \right\}, \\
f_{\mu}(x) &= \int_{\Theta} \rho(x, \theta)\beta(\theta)\, d\mu(\theta), \quad \text{for all } x \in \mathcal{X}, \\
\|f\|_{\mathcal{B}_{\mathcal{N}\mathcal{N}}} &= \inf \left\{ \|\mu\|_{TV} : f = f_{\mu} \right\}.
\end{align*}
This RKBS is closely related to fully connected feedforward neural networks, specifically feedforward MLPs with a single hidden layer. According to Theorem 3.9 in \cite{Bertolucci2023}, if we let $\mathcal{N}_{\text{net}}$ represent the group of functions of an MLP with one hidden layer, then $\mathcal{N}_{\text{net}}$ forms a finite-dimensional Banach subspace of $\mathcal{B}_{\mathcal{N}\mathcal{N}}$, with its dimension being $K \leq N$. Furthermore, solutions to the abstract training problem linked to $\mathcal{B}_{\mathcal{N}\mathcal{N}}$ are located within $\mathcal{N}_{\text{net}}$. Simply put, solving the abstract training problem related to $\mathcal{B}_{\mathcal{N}\mathcal{N}}$ is equivalent to training an MLP. 

This is a key result that motivates everything that follows, as it provides a methodology for constructing discrete neural structures as Banach subspaces of RKBS. On the one hand, this endows neural networks with a theoretical interpretation they previously lacked: namely, they can be viewed as finite-dimensional Banach spaces, where the dimension corresponds to the number of neurons in the network's hidden layer. In this sense, it becomes natural to consider generalizing the concept of RKBS in order to construct neural structures that are qualitatively different from classical ones. 

\section{Generalized Reproducing Kernel Banach Spaces}\label{sec:GRKBS}
In this section, we develop an extension of the concept of Reproducing Kernel Banach Spaces (RKBS). Our aim is to include spaces of functions with more general codomains, encompassing Banach spaces beyond $\mathbb{R}$, including infinite-dimensional settings. From the standpoint of the definition itself—by analogy with Definition~\ref{def2}—this simply amounts to requiring that pointwise evaluation operators be bounded. However, when considering the underlying space, the associated feature map, and the resulting theoretical framework, a more in-depth analysis is required.

In contrast to classical RKBS, these Generalized Reproducing Kernel Banach Spaces (GRKBS) will exhibit a broader connection with neural network architectures, allowing for the inclusion of multiple hidden layers of arbitrary finite dimension, as well as hidden layers of infinite dimension. In this manuscript, we present an example of a GRKBS associated with a neural network featuring a single hidden layer, whose output is given by the solution of a partial differential equation simulating the electrical activation in a specific region of the brain. Our aim is to introduce neural networks that are qualitatively different from conventional models, not only to perform prediction or classification tasks, but also to emulate the physical and physiological processes by which the brain detects, encodes, and responds to surrounding environmental information. We refer to these architectures as \emph{physically-modeled neural networks}. For this type of networks, it is reasonable to consider function spaces with a broader codomain, such as a Banach space. 

Throughout this paper, let $\mathit{E}$ be a Banach space, and let $\mathcal{B}$ be a Banach space of functions $f: \mathcal{X} \rightarrow \mathit{E}$, endowed with pointwise addition and scalar multiplication. For each $x \in \mathcal{X}$, we define the evaluation operator $ev_x: \mathcal{B} \rightarrow \mathit{E}$ as the linear map that sends each $f \in \mathcal{B}$ to its value at $x$, that is, $ev_x(f) = f(x)$.

Moreover, we denote by $L_{\mathrm{op}}(\mathcal{B}, \mathit{E})$ the space of bounded linear operators from $\mathcal{B}$ into $\mathit{E}$.

Finally, given two Banach spaces, denoted by $\mathcal{B}$ and $\mathcal{F}$, we say that they are \emph{isometrically isomorphic}, denoted by $\mathcal{B} \cong \mathcal{F}$, if there exists an isometric isomorphism from $\mathcal{B}$ to $\mathcal{F}$.

\begin{definition}\label{def3}
We say that $\mathcal{B}$ is a \textbf{generalized reproducing kernel Banach space (GRKBS)} if, for every $x \in \mathcal{X}$, the evaluation operator $ev_x$ is bounded. In this case, we write $ev_x \in L_{\mathrm{op}}(\mathcal{B}, \mathit{E})$.
\end{definition}
Following Proposition 3.3 from \cite{Bertolucci2023}, we will prove the following proposition.

\begin{proposition}\label{prop2} 
Let $\mathit{E}$ be a Banach space and let $\mathcal{B}$ be a vector space of functions as in Definition \ref{def3}. $\mathcal{B}$ is a GRKBS if and only if there exist a Banach space $\mathcal{F}$ and a mapping $\phi: \mathcal{X} \rightarrow L_{\mathrm{op}}(\mathcal{F}, \mathit{E})$, such that:
\begin{align}
&\mathcal{B} = \left\{f_{\mu}: \mu \in \mathcal{F}\right\},\; \text{where}\; f_{\mu}(x) = \phi(x)(\mu), \forall x \in \mathcal{X}, \mu \in \mathcal{F}\label{eqn2},\\
&\|f\|_{\mathcal{B}} = \inf\left\{\|\mu\|_{\mathcal{F}}: f = f_{\mu}\right\}\label{eqn3}.
\end{align}
\end{proposition}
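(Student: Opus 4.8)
The plan is to treat the two implications separately: the forward direction is essentially tautological, while the reverse direction rests on a quotient-space construction. For the forward implication, suppose $\mathcal{B}$ is a GRKBS. I would simply take $\mathcal{F}:=\mathcal{B}$ and $\phi(x):=ev_x$, which belongs to $L_{\mathrm{op}}(\mathcal{B},\mathit{E})=L_{\mathrm{op}}(\mathcal{F},\mathit{E})$ precisely by Definition~\ref{def3}. Then for $\mu\in\mathcal{F}$ one has $f_\mu(x)=\phi(x)(\mu)=ev_x(\mu)=\mu(x)$, so $f_\mu=\mu$ as a function; hence $\{f_\mu:\mu\in\mathcal{F}\}=\mathcal{B}$, and since $f=f_\mu$ forces $\mu=f$, the infimum in \eqref{eqn3} collapses to $\|f\|_{\mathcal{B}}$. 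Both displayed conditions are therefore satisfied.

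For the reverse implication, assume $\mathcal{F}$ and $\phi$ as in the statement are given. I would first check that the evaluation operators are bounded: for $f=f_\mu$ and any $\mu'\in\mathcal{F}$ with $f_{\mu'}=f$, one has $\|ev_x(f)\|_{\mathit{E}}=\|\phi(x)(\mu')\|_{\mathit{E}}\le\|\phi(x)\|_{L_{\mathrm{op}}(\mathcal{F},\mathit{E})}\,\|\mu'\|_{\mathcal{F}}$, and passing to the infimum over all such $\mu'$ gives, via \eqref{eqn3}, the estimate $\|ev_x(f)\|_{\mathit{E}}\le\|\phi(x)\|_{L_{\mathrm{op}}(\mathcal{F},\mathit{E})}\,\|f\|_{\mathcal{B}}$. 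Thus $ev_x\in L_{\mathrm{op}}(\mathcal{B},\mathit{E})$ with $\|ev_x\|\le\|\phi(x)\|$, once we know that $\mathcal{B}$ with the norm \eqref{eqn3} is a Banach space.

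Establishing that Banach-space structure is the central point. I would introduce the linear map $\iota:\mathcal{F}\to\mathcal{B}$, $\mu\mapsto f_\mu$; linearity follows from the linearity of each $\phi(x)$, and surjectivity is exactly \eqref{eqn2}. Its kernel $\mathcal{N}=\{\mu\in\mathcal{F}:\phi(x)(\mu)=0\text{ for all }x\in\mathcal{X}\}=\bigcap_{x\in\mathcal{X}}\ker\phi(x)$ is a closed subspace of $\mathcal{F}$, being an intersection of kernels of bounded operators. Hence $\iota$ descends to a linear bijection $\bar\iota:\mathcal{F}/\mathcal{N}\to\mathcal{B}$. The decisive observation is that the fiber $\iota^{-1}(\{f\})$ of a point $f=f_\mu$ is precisely the coset $\mu+\mathcal{N}$, so the quotient norm of $\mu+\mathcal{N}$ equals $\inf\{\|\mu'\|_{\mathcal{F}}:f_{\mu'}=f_\mu\}$, which is exactly $\|f_\mu\|_{\mathcal{B}}$ by \eqref{eqn3}; therefore $\bar\iota$ is an isometric isomorphism, $\mathcal{F}/\mathcal{N}\cong\mathcal{B}$. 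Since $\mathcal{F}$ is complete and $\mathcal{N}$ is closed, $\mathcal{F}/\mathcal{N}$ is a Banach space, and so is $\mathcal{B}$; in particular \eqref{eqn3} defines a genuine norm (not merely a seminorm), and the vector-space operations transported by $\iota$ coincide with the pointwise ones on $\mathcal{B}$. Combining this with the previous paragraph yields that $\mathcal{B}$ is a GRKBS.

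I expect the only genuinely delicate step to be the identification of \eqref{eqn3} with the quotient norm on $\mathcal{F}/\mathcal{N}$ together with the verification that $\mathcal{N}$ is closed: these two facts are what simultaneously guarantee that \eqref{eqn3} separates points and that $\mathcal{B}$ is complete. Everything else amounts to bookkeeping with linearity and operator-norm estimates.
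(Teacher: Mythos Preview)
Your proposal is correct and follows essentially the same route as the paper: the trivial choice $\mathcal{F}=\mathcal{B}$, $\phi(x)=ev_x$ for the forward direction, and for the converse the quotient construction $\mathcal{F}/\mathcal{N}$ with $\mathcal{N}=\bigcap_{x\in\mathcal{X}}\ker\phi(x)$, identifying the quotient norm with \eqref{eqn3} to obtain completeness, followed by the operator-norm estimate $\|ev_x(f)\|_{\mathit{E}}\le\|\phi(x)\|\,\|f\|_{\mathcal{B}}$. Your write-up is in fact slightly more explicit than the paper's in justifying closedness of $\mathcal{N}$ and in noting that the fiber $\iota^{-1}(\{f\})$ is the coset $\mu+\mathcal{N}$.
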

\begin{proof}
First, suppose that $\mathcal{B}$ is a GRKBS as defined in Definition \ref{def3}. Let $\mathcal{F} = \mathcal{B}$ and define $\phi: \mathcal{X} \rightarrow L_{\mathrm{op}}(\mathcal{B}, \mathit{E})$ by $\phi(x) = ev_x$. Clearly, this choice of $\mathcal{F}$ and $\phi$ satisfies conditions \eqref{eqn2} and \eqref{eqn3}.

Conversely, suppose that $\mathcal{B}$ is such that there exist $\mathcal{F}$ and $\phi$ satisfying \eqref{eqn2} and \eqref{eqn3}. We aim to show that $\mathcal{B}$ is a GRKBS in the sense of Definition \ref{def3}.

To this end, define the linear operator $\phi_{*} : \mathcal{F} \rightarrow \mathcal{B}$ by $\phi_{*}(\mu) = f_{\mu}$. This operator is surjective, since condition \eqref{eqn2} is assumed to hold. However, it is not necessarily injective, as a given $f \in \mathcal{B}$ may admit multiple representations $f = f_{\mu}$ with different $\mu \in \mathcal{F}$. 

Now define
\begin{align}\label{defN}
\mathcal{N}_{\phi} = \ker \phi_{*} = \bigcap_{x \in \mathcal{X}} \ker \phi(x),
\end{align}
which is a closed subspace of $\mathcal{F}$. The quotient space $\mathcal{F}/\mathcal{N}_{\phi}$ is a Banach space equipped with the norm
\[
\|\mu + \mathcal{N}_{\phi}\|_{\mathcal{F}/\mathcal{N}_{\phi}} = \inf \left\{ \|\nu\|_{\mathcal{F}} : \nu \in \mu + \mathcal{N}_{\phi} \right\}, \quad \text{for all } \mu + \mathcal{N}_{\phi} \in \mathcal{F}/\mathcal{N}_{\phi}.
\]
Define again $\phi_{*} : \mathcal{F}/\mathcal{N}_{\phi} \rightarrow \mathcal{B}$ by $\phi_{*}\left(\mu + \mathcal{N}_{\phi}\right) = f_{\mu}$. Then $\phi_{*}$ is an isomorphism from $\mathcal{F}/\mathcal{N}_{\phi}$ onto $\mathcal{B}$, and by construction of the norm on the quotient, this isomorphism is an isometry. Since $\mathcal{F}/\mathcal{N}_{\phi}$ is a Banach space, it follows that $\mathcal{B}$ is also a Banach space.

Finally, for every $x \in \mathcal{X}$ and $f \in \mathcal{B}$, we have
\[
\|ev_x(f)\|_{\mathit{E}} = \|f_{\mu}(x)\|_{\mathit{E}} = \|\phi(x)(\mu)\|_{\mathit{E}}, \quad \text{for all } \mu \in \mathcal{F} \text{ such that } f = f_{\mu}.
\]
Hence,
\[
\|ev_x(f)\|_{\mathit{E}} \leq \|\phi(x)\|_{L(\mathcal{F}, \mathit{E})} \|\mu\|_{\mathcal{F}} \leq \|\phi(x)\|_{L(\mathcal{F}, \mathit{E})} \inf_{\mu \in \mathcal{F} : f = f_{\mu}} \|\mu\|_{\mathcal{F}} = \|\phi(x)\|_{L(\mathcal{F}, \mathit{E})} \|f\|_{\mathcal{B}},
\]
which shows that $ev_x \in L_{\mathrm{op}}(\mathcal{B}, \mathit{E})$.
\end{proof}

In this proof, the space $\mathcal{N}_{\phi}$ is referred to as the kernel of $\phi$.
\begin{remark}\label{remark3}
Proposition~\ref{prop2} not only provides an alternative definition of a GRKBS, but also shows that a GRKBS is uniquely determined by the pair $(\mathcal{F}, \phi)$. In this sense, the proposition provides a direct method for constructing GRKBSs by relating them to a different Banach space, $\mathcal{F}$, which is not a function space and which, through the characteristic map $\phi$, parametrizes the space $\mathcal{B}$.

We refer to the pair $(\mathcal{F}, \phi)$ as a \textbf{\emph{configuration}} of the GRKBS, where $\mathcal{F}$ is called the \emph{configuration space}, and $\phi$ is the \textbf{\emph{configuration map}}.
\end{remark}
On the other hand, define
\begin{align*}
\mathcal{N}_{\phi}^{\bot} = \left\{G \in L(\mathcal{F}, \mathit{E}) : G(\mu) = 0 \in \mathit{E},\ \forall \mu \in \mathcal{N}_{\phi} \right\}.
\end{align*}
In connection with this set, we present the following proposition.

\begin{proposition}\label{prop4}
The isometric isomorphism $\phi_{*} : \mathcal{F}/\mathcal{N}_{\phi} \rightarrow \mathcal{B}$ established in the proof of Proposition~\ref{prop2} induces an isometric isomorphism from $\mathcal{N}_{\phi}^{\bot}$ to $L_{\mathrm{op}}(\mathcal{B}, \mathit{E})$.
\end{proposition}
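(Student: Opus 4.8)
The plan is to construct the induced map explicitly and verify it is a bijective isometry. Recall from the proof of Proposition~\ref{prop2} that $\phi_{*}: \mathcal{F}/\mathcal{N}_{\phi} \to \mathcal{B}$ is an isometric isomorphism, where $\phi_{*}(\mu + \mathcal{N}_{\phi}) = f_{\mu}$. Its inverse $\phi_{*}^{-1}: \mathcal{B} \to \mathcal{F}/\mathcal{N}_{\phi}$ is therefore also an isometric isomorphism. I would define the map $\Psi: L_{\mathrm{op}}(\mathcal{B}, \mathit{E}) \to \mathcal{N}_{\phi}^{\bot}$ by sending $T \in L_{\mathrm{op}}(\mathcal{B}, \mathit{E})$ to the composition $T \circ \phi_{*} \circ q$, where $q: \mathcal{F} \to \mathcal{F}/\mathcal{N}_{\phi}$ is the canonical quotient map; equivalently, $\Psi(T)(\mu) = T(f_{\mu})$. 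The first thing to check is that $\Psi(T)$ indeed lies in $\mathcal{N}_{\phi}^{\bot}$: if $\mu \in \mathcal{N}_{\phi}$, then $f_{\mu} = 0$ in $\mathcal{B}$ (by definition of $\mathcal{N}_{\phi} = \ker \phi_{*}$ at the level of $\mathcal{F}$), so $\Psi(T)(\mu) = T(0) = 0$. It is also clearly linear in $T$.

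Next I would construct the inverse map $\Phi: \mathcal{N}_{\phi}^{\bot} \to L_{\mathrm{op}}(\mathcal{B}, \mathit{E})$. Given $G \in \mathcal{N}_{\phi}^{\bot}$, the condition $G|_{\mathcal{N}_{\phi}} = 0$ means $G$ factors through the quotient: there is a well-defined bounded linear $\widetilde{G}: \mathcal{F}/\mathcal{N}_{\phi} \to \mathit{E}$ with $\widetilde{G} \circ q = G$, and the standard quotient-norm argument gives $\|\widetilde{G}\| = \|G\|$. Then set $\Phi(G) = \widetilde{G} \circ \phi_{*}^{-1}$, which is a bounded linear operator from $\mathcal{B}$ to $\mathit{E}$. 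A direct check shows $\Phi$ and $\Psi$ are mutually inverse: $\Psi(\Phi(G))(\mu) = \Phi(G)(f_{\mu}) = \widetilde{G}(\phi_{*}^{-1}(f_{\mu})) = \widetilde{G}(\mu + \mathcal{N}_{\phi}) = G(\mu)$, and symmetrically for $\Phi \circ \Psi$.

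Finally, I would establish that $\Psi$ (equivalently $\Phi$) is an isometry. For $G \in \mathcal{N}_{\phi}^{\bot}$ we have $\|\Phi(G)\|_{L_{\mathrm{op}}(\mathcal{B},\mathit{E})} = \|\widetilde{G} \circ \phi_{*}^{-1}\|$; since $\phi_{*}^{-1}$ is an isometric isomorphism, composition with it preserves operator norm, so this equals $\|\widetilde{G}\|_{L(\mathcal{F}/\mathcal{N}_{\phi}, \mathit{E})} = \|G\|_{L(\mathcal{F},\mathit{E})}$, using the quotient-norm identity noted above. This completes the argument. The main obstacle — though a mild one — is the quotient-norm step $\|\widetilde{G}\| = \|G\|$: the inequality $\|\widetilde{G}\| \le \|G\|$ follows from $\|\mu + \mathcal{N}_{\phi}\| = \inf_{\nu \in \mu + \mathcal{N}_{\phi}} \|\nu\|_{\mathcal{F}}$ together with taking an infimum over representatives, while $\|G\| \le \|\widetilde{G}\|$ is immediate from $G = \widetilde{G} \circ q$ and $\|q\| \le 1$; one should be careful that the definition of $\mathcal{N}_{\phi}^{\bot}$ uses the possibly-larger space $L(\mathcal{F}, \mathit{E})$ of all bounded operators, so one must confirm $G$ is genuinely bounded on $\mathcal{F}$ before factoring, which it is by hypothesis.
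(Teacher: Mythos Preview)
Your proof is correct and follows essentially the same approach as the paper: both factor the correspondence through $L(\mathcal{F}/\mathcal{N}_{\phi},\mathit{E})$ via the universal property of the quotient (your $\widetilde G$ is the paper's $\mathcal{A}'=\phi'_{*}(\mathcal{A})$), and then transport along the isometry $\phi_{*}$ to reach $L_{\mathrm{op}}(\mathcal{B},\mathit{E})$. The only cosmetic difference is that you also write down the inverse map $\Psi$ explicitly and invoke the quotient-norm identity $\|\widetilde G\|=\|G\|$ as a standard fact, whereas the paper verifies that identity by an explicit sup--inf computation.
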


\begin{proof}
Let us define the linear operator $\phi'_{*} : \mathcal{N}_{\phi}^{\bot} \rightarrow L(\mathcal{F}/\mathcal{N}_{\phi}, \mathit{E})$, which assigns to each $\mathcal{A} \in \mathcal{N}_{\phi}^{\bot}$ the operator $\mathcal{A}' \in L(\mathcal{F}/\mathcal{N}_{\phi}, \mathit{E})$ defined by $\mathcal{A}'(\mu + \mathcal{N}_{\phi}) = \mathcal{A}(\mu)$. It is straightforward to verify that $\phi'_{*}$ is linear. Moreover, since $\mathcal{A} \in \mathcal{N}_{\phi}^{\bot}$, the operator $\mathcal{A}'$ is well defined. Indeed, if $\nu \in \mu + \mathcal{N}_{\phi}$, then $\mathcal{A}(\nu) = \mathcal{A}(\mu)$.

Now, let us prove that $\phi'_{*}$ is an isomorphism. Suppose that $\mathcal{A} \in \mathcal{N}_{\phi}^{\bot}$ satisfies $\phi'_{*}(\mathcal{A}) \equiv \mathbf{0}$ in $L(\mathcal{F}/\mathcal{N}_{\phi}, \mathit{E})$. This means that the associated operator $\mathcal{A}'$ satisfies $\mathcal{A}'(\mu + \mathcal{N}_{\phi}) = \mathcal{A}(\mu) = 0$ for all $\mu \in \mathcal{F}$. Hence, $\mathcal{A} \equiv \mathbf{0}$, and we conclude that $\phi'_{*}$ is injective. 

On the other hand, given an operator $\mathcal{A}' \in L(\mathcal{F}/\mathcal{N}_{\phi}, \mathit{E})$, we can define an operator $\mathcal{A} \in \mathcal{N}_{\phi}^{\bot}$ that is constant on each equivalence class of the quotient $\mathcal{F}/\mathcal{N}_{\phi}$, i.e., $\left.\mathcal{A}\right|_{\mu + \mathcal{N}_{\phi}} = \mathcal{A}'(\mu + \mathcal{N}_{\phi})$. Consequently, we have $\left.\mathcal{A}\right|_{\mathcal{N}_{\phi}} = \mathcal{A}'(\mathcal{N}_{\phi}) = 0$, so that $\mathcal{A} \in \mathcal{N}_{\phi}^{\bot}$. This follows from the linearity of $\mathcal{A}'$, which implies that $\mathcal{A}'(\mathcal{N}_{\phi}) = 0$. From all of the above, we conclude that $\mathcal{A} = \left(\phi'_{*}\right)^{-1}(\mathcal{A}')$, proving that $\phi'_{*}$ is surjective. 

To show that $\phi'_{*}$ is an isometry, we verify that
\begin{align*}
\|\mathcal{A}'\|_{L(\mathcal{F}/\mathcal{N}_{\phi}, \mathit{E})} &= \sup_{\mu + \mathcal{N}_{\phi} \in \mathcal{F}/\mathcal{N}_{\phi}} \frac{\|\mathcal{A}'(\mu + \mathcal{N}_{\phi})\|_{\mathit{E}}}{\|\mu + \mathcal{N}_{\phi}\|_{\mathcal{F}/\mathcal{N}_{\phi}}} = \sup_{\mu + \mathcal{N}_{\phi} \in \mathcal{F}/\mathcal{N}_{\phi}} \frac{\|\mathcal{A}(\mu)\|_{\mathit{E}}}{\inf_{\nu \in \mu + \mathcal{N}_{\phi}} \|\nu\|_{\mathcal{F}}} \\
&= \sup_{\mu + \mathcal{N}_{\phi} \in \mathcal{F}/\mathcal{N}_{\phi}} \left( \sup_{\nu \in \mu + \mathcal{N}_{\phi}} \frac{\|\mathcal{A}(\nu)\|_{\mathit{E}}}{\|\nu\|_{\mathcal{F}}} \right) = \sup_{\nu \in \mathcal{F}} \frac{\|\mathcal{A}(\nu)\|_{\mathit{E}}}{\|\nu\|_{\mathcal{F}}} = \|\mathcal{A}\|_{L(\mathcal{F}, \mathit{E})}.
\end{align*}
Therefore, we have shown that $\phi'_{*}$ is an isometry. 

Moreover, it follows that the isometry $\phi_{*} : \mathcal{F}/\mathcal{N}_{\phi} \rightarrow \mathcal{B}$ induces a natural isometry, which we denote by $\widetilde{\phi}'_{*}$, from $L(\mathcal{F}/\mathcal{N}_{\phi}, \mathit{E})$ to $L_{\mathrm{op}}(\mathcal{B}, \mathit{E})$. Indeed, given $\mathcal{A} \in L(\mathcal{F}/\mathcal{N}_{\phi}, \mathit{E})$, we define $\mathcal{A}^{\bot} \in L_{\mathrm{op}}(\mathcal{B}, \mathit{E})$ by
\[
\mathcal{A}^{\bot}(f) = \mathcal{A}(\phi_{*}^{-1}(f)),
\]
where $\phi_{*}^{-1}(f) = \mu + \mathcal{N}_{\phi}$ for any $\mu \in \mathcal{F}$ such that $f = f_{\mu}$. It is straightforward to verify that the operator $\widetilde{\phi}'_{*}$ is an isometry.

Finally, we obtain the required isometry from $\mathcal{N}_{\phi}^{\bot}$ to $L_{\mathrm{op}}(\mathcal{B}, \mathit{E})$ as the composition $\phi_{*}^{\bot} = \widetilde{\phi}'_{*} \circ \phi'_{*}$.
\end{proof}

The analysis presented above leads to the following proposition, which will also play a role later in the discussion of the uniqueness of the structure defining a GRKBS.

\begin{proposition}\label{prop3}
Under the conditions of Proposition \ref{prop2}, it follows that $\mathcal{B}$ is a GRKBS if and only if there exist $(\mathcal{F}, \phi)$ as in Proposition \ref{prop2} such that
\begin{align}
\mathcal{B} \cong \mathcal{F}/\mathcal{N}_{\phi},
\end{align}
which is equivalent to stating that
\begin{align}
\mathcal{N}_{\phi}^{\perp} \cong L_{\mathrm{op}}(\mathcal{B}, \mathit{E}).
\end{align}
Here, the symbol $\cong$ denotes an isometric isomorphism between the corresponding Banach spaces.
\end{proposition}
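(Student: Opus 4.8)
The plan is to prove the two equivalences largely by unwinding the constructions already established in Propositions~\ref{prop2} and~\ref{prop4}, since almost all the work has been done there. The statement asserts that $\mathcal{B}$ is a GRKBS if and only if there exists a configuration $(\mathcal{F},\phi)$ with $\mathcal{B}\cong\mathcal{F}/\mathcal{N}_{\phi}$, and that this in turn is equivalent to $\mathcal{N}_{\phi}^{\perp}\cong L_{\mathrm{op}}(\mathcal{B},\mathit{E})$. First I would handle the chain $\text{GRKBS}\iff(\exists(\mathcal{F},\phi):\mathcal{B}\cong\mathcal{F}/\mathcal{N}_{\phi})$. For the forward direction, assume $\mathcal{B}$ is a GRKBS; by Proposition~\ref{prop2} there exists a configuration $(\mathcal{F},\phi)$ satisfying \eqref{eqn2} and \eqref{eqn3}, and the proof of that proposition explicitly constructs the isometric isomorphism $\phi_{*}:\mathcal{F}/\mathcal{N}_{\phi}\to\mathcal{B}$, so $\mathcal{B}\cong\mathcal{F}/\mathcal{N}_{\phi}$ is immediate. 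For the converse, if such a $(\mathcal{F},\phi)$ exists with $\mathcal{B}\cong\mathcal{F}/\mathcal{N}_{\phi}$, then in particular $(\mathcal{F},\phi)$ is a configuration in the sense of Proposition~\ref{prop2}, so conditions \eqref{eqn2}--\eqref{eqn3} hold, and the ``if'' direction of Proposition~\ref{prop2} already shows $ev_x\in L_{\mathrm{op}}(\mathcal{B},\mathit{E})$ for every $x$; hence $\mathcal{B}$ is a GRKBS. (Strictly, one should note that the mere existence of configuration data $(\mathcal{F},\phi)$ satisfying \eqref{eqn2}--\eqref{eqn3} is what Proposition~\ref{prop2} requires, and the isometry $\mathcal{B}\cong\mathcal{F}/\mathcal{N}_{\phi}$ is then automatic, so the two converses coincide.)

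Next I would establish the equivalence $\big(\mathcal{B}\cong\mathcal{F}/\mathcal{N}_{\phi}\big)\iff\big(\mathcal{N}_{\phi}^{\perp}\cong L_{\mathrm{op}}(\mathcal{B},\mathit{E})\big)$ for a \emph{fixed} configuration $(\mathcal{F},\phi)$. The forward implication is exactly Proposition~\ref{prop4}: the isometric isomorphism $\phi_{*}:\mathcal{F}/\mathcal{N}_{\phi}\to\mathcal{B}$ induces, via the composition $\phi_{*}^{\bot}=\widetilde{\phi}'_{*}\circ\phi'_{*}$, an isometric isomorphism $\mathcal{N}_{\phi}^{\perp}\cong L_{\mathrm{op}}(\mathcal{B},\mathit{E})$. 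For the reverse direction one can argue that whenever $(\mathcal{F},\phi)$ is a configuration of $\mathcal{B}$ (i.e.\ \eqref{eqn2}--\eqref{eqn3} hold), the isometry $\phi_{*}:\mathcal{F}/\mathcal{N}_{\phi}\to\mathcal{B}$ is \emph{always} present by the proof of Proposition~\ref{prop2}; thus in the setting of Proposition~\ref{prop3} both displayed isometries hold simultaneously precisely when $(\mathcal{F},\phi)$ is such a configuration, so the two conditions are equivalent to each other and to $\mathcal{B}$ being a GRKBS. I would phrase the proof so that all three conditions are shown to be mutually equivalent through this single observation, rather than proving each arrow separately.

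The one point that deserves care — and which I expect to be the main subtlety rather than a genuine obstacle — is the precise logical reading of the statement: the phrase ``there exist $(\mathcal{F},\phi)$ as in Proposition~\ref{prop2} such that $\mathcal{B}\cong\mathcal{F}/\mathcal{N}_{\phi}$'' must be understood as \emph{there exists a configuration}, and one must make explicit that for any such configuration the isometry with the quotient is automatic (from the construction in the proof of Proposition~\ref{prop2}), so that the added condition ``$\mathcal{B}\cong\mathcal{F}/\mathcal{N}_{\phi}$'' is not an extra hypothesis but a structural consequence. Once this is spelled out, the equivalence with $\mathcal{N}_{\phi}^{\perp}\cong L_{\mathrm{op}}(\mathcal{B},\mathit{E})$ follows from Proposition~\ref{prop4} together with the trivial remark that an isometric isomorphism $T:\mathcal{F}/\mathcal{N}_{\phi}\to\mathcal{B}$ yields an isometric isomorphism $S\mapsto S\circ T^{-1}$ between $L(\mathcal{F}/\mathcal{N}_{\phi},\mathit{E})$ and $L_{\mathrm{op}}(\mathcal{B},\mathit{E})$, combined with the isometry $\phi'_{*}:\mathcal{N}_{\phi}^{\perp}\to L(\mathcal{F}/\mathcal{N}_{\phi},\mathit{E})$ of Proposition~\ref{prop4}. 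No hard analysis is needed; the proof is essentially a bookkeeping assembly of the two preceding propositions, and I would keep it short, citing them explicitly at each step.
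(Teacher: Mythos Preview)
Your proposal is correct and follows essentially the same route as the paper: the paper does not give a separate proof of Proposition~\ref{prop3} at all, but introduces it with the sentence ``The analysis presented above leads to the following proposition,'' treating it as an immediate repackaging of Propositions~\ref{prop2} and~\ref{prop4}. Your write-up simply makes that repackaging explicit, and your observation that the clause ``$\mathcal{B}\cong\mathcal{F}/\mathcal{N}_{\phi}$'' is a structural consequence of any configuration rather than an extra hypothesis is exactly the right way to read the statement.
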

This proposition provides two verifiable conditions that determine whether a given pair $(\mathcal{F}, \phi)$ constitutes a configuration of a specific GRKBS $\mathcal{B}$.

We are therefore interested in presenting several examples that are particularly relevant to applications, due to their connection with fully connected neural networks. Before doing so, we first aim to unify the definitions of RKBS that have appeared in the literature. In particular, we identify two main formulations: the one presented in \cite{Bertolucci2023}, which constitutes the primary basis of our framework; and the one given in \cite{Xu2019}, which explicitly centers on the reproducing kernel. 

Following Definition 2.1 in \cite{Xu2019}, we introduce the following definition, which serves as a generalization of that formulation.
\begin{definition}\label{def6}
Let $\Omega$ be a set and $\Omega'$ a Hausdorff topological space. Let $\mathcal{B}$ be a Banach space of functions $f: \Omega \rightarrow \mathit{E}$. Additionally, suppose there exists a function $K: \Omega \times \Omega' \rightarrow \mathit{E}$ such that, for every $x \in \Omega$, the function $K(x, \cdot)$ is measurable with respect to the variable in $\Omega'$. We say that $\mathcal{B}$ is a \textit{left-sided generalized reproducing kernel Banach space} (left-sided GRKBS), and that $K$ is its \textit{left-sided reproducing kernel}, if the following conditions are satisfied:
\begin{description}
    \item[i)] There exists a normed vector space $\mathcal{G}$ of functions $g: \Omega' \rightarrow \mathit{E},$ such that $\forall x \in \Omega$ $K(x, \cdot) \in \mathcal{G},$ and an isometric isomorphism $\mathcal{J}: \mathcal{G} \rightarrow L_{\mathrm{op}}(\mathcal{B}, \mathit{E})$.
    \item[ii)] For all $x \in \Omega$ and $f \in \mathcal{B}$, we have $f(x) = \mathcal{J} \left[K(x, \cdot)\right](f)$.
\end{description}

If, in addition, the following conditions also hold:
\begin{description}
    \item[iii)] For all $y \in \Omega'$, the function $K(\cdot, y)$ belongs to $\mathcal{B}$.
    \item[iv)] For all $g \in \mathcal{G}$, we have $g(y) = (\mathcal{J} \circ g)(K(\cdot, y))$, identifying $g$ with an element of $L_{\mathrm{op}}(\mathcal{B}, \mathit{E})$.
\end{description}

Then $\mathcal{B}$ is called a \textit{right-sided GRKBS}, and $K$ its \textit{right-sided reproducing kernel}.

If $\mathcal{B}$ satisfies both sets of conditions, it is referred to as a \textit{two-sided GRKBS}, and $K$ is called its \textit{two-sided reproducing kernel}, or simply its \textit{reproducing kernel}.
\end{definition}
\begin{remark}\label{remark1}
Note that if $\mathcal{B}$ is a left-sided GRKBS, then for all $x \in \Omega$, the evaluation operators $ev_{x}$ are continuous, i.e., $ev_{x} \in L_{\mathrm{op}}(\mathcal{B}, \mathit{E})$ for all $x \in \Omega$. Indeed,
\begin{align*}
\|f(x)\|_{\mathit{E}} = \|\mathcal{J}[\mathcal{K}(x, \cdot)](f)\|_{\mathit{E}} \leq \|\mathcal{J}[\mathcal{K}(x, \cdot)]\|_{L_{\mathrm{op}}(\mathcal{B}, \mathit{E})} \|f\|_{\mathcal{B}} = \|\mathcal{K}(x, \cdot)\|_{\mathcal{G}} \|f\|_{\mathcal{B}}.
\end{align*}
Thus, every left-sided GRKBS is, in fact, a GRKBS. In this setting, a configuration space $\mathcal{F}$ can be taken as $\mathcal{B}$ itself, and the characteristic map $\phi: \Omega \rightarrow L_{\mathrm{op}}(\mathcal{B}, \mathit{E})$ is given by $\phi(x) = \mathcal{J}\left[\mathcal{K}(x, \cdot)\right] \in L_{\mathrm{op}}(\mathcal{B}, \mathit{E})$. These GRKBS are associated with a function that plays the role of a reproducing kernel, establishing a strong analogy with reproducing kernel Hilbert spaces (RKHS). 
\end{remark}
Given a GRKBS, it is possible to construct a generalized two-sided reproducing kernel from a generating pair of the GRKBS. In other words, according to Definition~\ref{def6}, a GRKBS is a Banach space with a two-sided generalized reproducing kernel.   

We now state the following theorem.
\begin{theorem}\label{Teo2}
If $\mathcal{B}$ is a GRKBS with generating structure $\left(\mathcal{X}, \mathit{E}, \mathcal{F}, \phi\right)$, then it is a two-sided GRKBS, where the reproducing kernel can be defined as $\mathcal{K}: \mathcal{X} \times \mathcal{F}/\mathcal{N}_{\phi} \rightarrow \mathit{E}$, defined by $\mathcal{K}\left(x, \mu + \mathcal{N}\right) = \phi_{*}\left(\mu + \mathcal{N}_{\phi}\right)(x) = f_{\mu}(x) = \phi(x)(\mu) \in \mathit{E}$.
\end{theorem}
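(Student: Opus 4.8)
The plan is to verify, one by one, the four conditions (i)--(iv) of Definition~\ref{def6}, taking $\Omega = \mathcal{X}$, $\Omega' = \mathcal{F}/\mathcal{N}_{\phi}$, and the candidate kernel $\mathcal{K}(x, \mu + \mathcal{N}_{\phi}) = \phi(x)(\mu)$. The natural choice for the auxiliary space $\mathcal{G}$ is $L(\mathcal{F}/\mathcal{N}_{\phi}, \mathit{E})$ itself, regarded as a space of $\mathit{E}$-valued functions on $\Omega' = \mathcal{F}/\mathcal{N}_{\phi}$ via the identification of an operator with its graph/values; and the isometric isomorphism $\mathcal{J}: \mathcal{G} \rightarrow L_{\mathrm{op}}(\mathcal{B}, \mathit{E})$ will be exactly the map $\widetilde{\phi}'_{*}$ (equivalently, post-composition with $\phi_{*}^{-1}$) constructed in the proof of Proposition~\ref{prop4}. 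The topology on $\Omega'$ is the norm topology of the quotient, which is Hausdorff, and measurability of $\mathcal{K}(x, \cdot) = \phi(x) \circ (\text{quotient identification})$ is immediate since $\phi(x)$ is bounded, hence continuous, hence Borel measurable.

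First I would check (i): for each fixed $x$, $\mathcal{K}(x, \cdot)$ is the map $\mu + \mathcal{N}_{\phi} \mapsto \phi(x)(\mu)$, which is well defined because $\mathcal{N}_{\phi} \subseteq \ker \phi(x)$ by \eqref{defN}, and it is a bounded linear operator $\mathcal{F}/\mathcal{N}_{\phi} \rightarrow \mathit{E}$, so indeed $\mathcal{K}(x, \cdot) \in \mathcal{G}$; the existence of the isometric isomorphism $\mathcal{J}$ is then Proposition~\ref{prop4}. Next, condition (ii): for $f \in \mathcal{B}$ with $f = f_{\mu}$, one has $\mathcal{J}[\mathcal{K}(x, \cdot)](f) = \widetilde{\phi}'_{*}(\mathcal{K}(x,\cdot))(f) = \mathcal{K}(x, \cdot)(\phi_{*}^{-1}(f)) = \mathcal{K}(x, \mu + \mathcal{N}_{\phi}) = \phi(x)(\mu) = f_{\mu}(x) = f(x)$, which is exactly the reproducing identity. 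For (iii), I would fix $y = \nu + \mathcal{N}_{\phi} \in \Omega'$ and consider $\mathcal{K}(\cdot, \nu + \mathcal{N}_{\phi}): x \mapsto \phi(x)(\nu)$; but by \eqref{eqn2} this is precisely the function $f_{\nu}$, which belongs to $\mathcal{B}$ by definition of $\mathcal{B}$ — so (iii) holds automatically.

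The one condition that requires genuine care is (iv): for $g \in \mathcal{G} = L(\mathcal{F}/\mathcal{N}_{\phi}, \mathit{E})$ one must show $g(y) = (\mathcal{J} \circ g)(\mathcal{K}(\cdot, y))$ under the identification of $g$ with the element $\mathcal{J}(g) \in L_{\mathrm{op}}(\mathcal{B}, \mathit{E})$; I read the right-hand side as $\mathcal{J}(g)$ applied to $\mathcal{K}(\cdot, y) \in \mathcal{B}$. Writing $y = \nu + \mathcal{N}_{\phi}$, we have $\mathcal{K}(\cdot, y) = f_{\nu}$, so $\phi_{*}^{-1}(f_{\nu}) = \nu + \mathcal{N}_{\phi}$, and therefore $\mathcal{J}(g)(\mathcal{K}(\cdot, y)) = \widetilde{\phi}'_{*}(g)(f_{\nu}) = g(\phi_{*}^{-1}(f_{\nu})) = g(\nu + \mathcal{N}_{\phi}) = g(y)$. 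So (iv) reduces to unwinding the same definitions, and the main obstacle is really just bookkeeping: making the identification ``$\mathcal{G} = L(\mathcal{F}/\mathcal{N}_{\phi}, \mathit{E})$ as a function space on $\mathcal{F}/\mathcal{N}_{\phi}$'' precise and consistent across (i)--(iv), and confirming that the $\mathcal{J}$ supplied by Proposition~\ref{prop4} is literally the evaluation-compatible isomorphism needed in (ii) and (iv). I would close the proof by assembling these four verified items and invoking Definition~\ref{def6} to conclude that $\mathcal{B}$ is a two-sided GRKBS with the stated $\mathcal{K}$.
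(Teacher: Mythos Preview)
Your proposal is correct and follows essentially the same approach as the paper: you both take $\mathcal{G} = L(\mathcal{F}/\mathcal{N}_{\phi}, \mathit{E})$, set $\mathcal{J} = \widetilde{\phi}'_{*}$ from Proposition~\ref{prop4}, and verify conditions (i)--(iv) via the identical computations $\widetilde{\phi}'_{*}[\mathcal{K}(x,\cdot)](f) = \mathcal{K}(x,\phi_{*}^{-1}(f)) = f(x)$ and $\mathcal{J}(g)(\mathcal{K}(\cdot,y)) = g(\phi_{*}^{-1}(f_{\nu})) = g(y)$. The only difference is that you are slightly more explicit about the Hausdorff topology on $\Omega'$, the measurability of $\mathcal{K}(x,\cdot)$, and the well-definedness on the quotient, which the paper leaves implicit.
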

\begin{proof}
First, let us prove that $\mathcal{K}$ is a left-sided generalized reproducing kernel. For this purpose, we set $\mathcal{G} = L_{\mathrm{op}}\left(\mathcal{F}/\mathcal{N}_{\phi}, E\right)$, so that, according to the definition of $\mathcal{K}$, the condition $\mathcal{K}(x, \cdot) \in \mathcal{G}$ is satisfied.

In the proof of Proposition~\ref{prop4}, we defined $\widetilde{\phi}'_{*}$ as an isometric isomorphism from $L(\mathcal{F}/\mathcal{N}_{\phi}, E)$ to $L(\mathcal{B}, E)$, such that
\begin{align*}
\widetilde{\phi}'_{*}\left[\mathcal{K}(x, \cdot)\right](f) = \mathcal{K}(x, \phi^{-1}_{*}(f)) = \phi_{*}(\phi^{-1}_{*}(f))(x) = f(x).
\end{align*}
Accordingly, in order to obtain conditions \textbf{i)} and \textbf{ii)} of the theorem, we set $\mathcal{J} = \widetilde{\phi}'_{*}$. It follows that $\mathcal{K}$ is a left-sided generalized reproducing kernel.

We will now prove that $\mathcal{K}$ is also a right-sided generalized reproducing kernel. Note that for $\mu + \mathcal{N}_{\phi} \in \mathcal{F}/\mathcal{N}_{\phi}$, we have
\[
\mathcal{K}(\cdot, \mu + \mathcal{N}_{\phi}) = \phi(\cdot)(\mu) = f_{\mu}(\cdot) \in \mathcal{B}.
\]

Moreover, considering $g \in L(\mathcal{F}/\mathcal{N}_{\phi}, \mathit{E})$, we compute
\[
\widetilde{\phi}'_{*} \circ g\left(\mathcal{K}(\cdot, \mu + \mathcal{N}_{\phi})\right) =  g\left(\phi^{-1}_{*}(\mathcal{K}(\cdot, \mu + \mathcal{N}_{\phi}))\right) = g(\phi^{-1}_{*}(f_{\mu})) = g(\mu + \mathcal{N}_{\phi}).
\]

\end{proof}

This result unifies two notions of RKBS that were previously introduced and studied independently in the literature. Moreover, we have generalized these notions to the case where the space $\mathcal{B}$ consists of functions $f: \mathcal{X} \rightarrow \mathit{E}$, with $\mathit{E}$ an arbitrary Banach space. The classical case corresponds to $\mathit{E} = \mathbb{R}$. 

In what follows, we explore how this generalization enriches the study of such spaces. Throughout, we adopt the definition of GRKBS that follows from Proposition~\ref{prop2}.

Theorem~\ref{Teo2} facilitates the construction of a reproducing kernel that is uniquely determined by the configuration $(\mathcal{X}, \mathit{E}, \mathcal{F}, \phi)$. In this setting, we identify the map $K: \mathcal{X} \times \mathcal{F}/\mathcal{N}_{\phi} \rightarrow \mathit{E}$, constructed in the proof of Theorem~\ref{Teo2}, as the reproducing kernel of $\mathcal{B}$.

For instance, in Section \ref{secc1}, we described an example of a GRKBS, $\mathcal{B}_{\mathcal{N}\mathcal{N}}$, with structure $(\mathbb{R}^n, \mathbb{R}, \mathcal{M}(\Theta), \phi)$, where $\Theta = \mathbb{R}^n \times \mathbb{R}$ and $\phi$ are specified by the expression given in \eqref{phiBN}. In this case, the reproducing kernel is the function $K: \mathbb{R}^{n} \times \mathcal{M}(\Theta)/\mathcal{N}_{\phi} \rightarrow \mathbb{R}$, defined by
$$K(x, \mu + \mathcal{N}_{\phi}) = \int_{\Theta} \rho(x, \theta) \beta(\theta) \, d\mu(\theta).$$
Here, $K(x, \cdot)$ is identified with an element of the dual space $\mathcal{B}_{\mathcal{N}\mathcal{N}}'$, such that
$$\prescript{}{\mathcal{B}_{\mathcal{N}\mathcal{N}}'}{\left\langle K(x, \cdot), f \right\rangle}_{\mathcal{B}_{\mathcal{N}\mathcal{N}}} = \int_{\Theta} \rho(x, \theta) \beta(\theta) \, d\mu(\theta), \quad \forall f \in \mathcal{B}_{\mathcal{N}\mathcal{N}}, \, f = f_{\mu}.$$
Moreover, for $\mu \in \mathcal{M}(\Theta)$, we have $K(\cdot, \mu) \in \mathcal{B}_{\mathcal{N}\mathcal{N}}$, such that for all $g \in \mathcal{B}_{\mathcal{N}\mathcal{N}}'$,
$$\prescript{}{\mathcal{B}_{\mathcal{N}\mathcal{N}}'}{\left\langle g, f \right\rangle}_{\mathcal{B}_{\mathcal{N}\mathcal{N}}} = \prescript{}{\mathcal{B}_{\mathcal{N}\mathcal{N}}'}{\left\langle g, K(\cdot, \mu) \right\rangle}_{\mathcal{B}_{\mathcal{N}\mathcal{N}}},$$
where $f = f_{\mu}$.

In the remainder of this section, we will present several results concerning the uniqueness of the structure associated with a GRKBS.
\subsection{Some Remarks and Results on Uniqueness}
Up to this point, we have established the notion of a GRKBS. As shown in Proposition~\ref{prop2}, each GRKBS $\mathcal{B}$ is uniquely determined by a configuration $(\mathcal{X}, \mathit{E}, \mathcal{F}, \phi)$ that characterizes the space. Unless otherwise stated, we will focus solely on the pair $(\mathcal{F}, \phi)$, which we refer to as the \textit{configuration pair} of the GRKBS.
\begin{remark}\label{remark2}
It is important to highlight that, given $\mathcal{X}$ and $\mathit{E}$, a GRKBS can be generated by configuration pairs that are essentially different. In fact, suppose that $\mathcal{B}$ is a GRKBS with a generator pair $(\mathcal{F}, \phi)$ such that the closed subspace $\mathcal{N}_{\phi}$, defined in \eqref{defN}, is nontrivial, i.e., $\mathcal{N}_{\phi} \neq \{0\}$.

Then, the pair $(\mathcal{B}, \phi_{\mathrm{triv}})$, where $\phi_{\mathrm{triv}}: \mathcal{X} \rightarrow L_{\mathrm{op}}(\mathcal{B}, \mathit{E})$ is defined by $\phi_{\mathrm{triv}}(x) = \mathrm{ev}_x$, also constitutes a configuration for $\mathcal{B}$. This follows from Proposition~\ref{prop3}, noting that for the configuration pair $(\mathcal{B}, \phi_{\mathrm{triv}})$ we have $\mathcal{B} \cong \mathcal{B}/\mathcal{N}_{\phi_{\mathrm{triv}}}$, with $\mathcal{N}_{\phi_{\mathrm{triv}}} = \{0\}$.

We refer to the pair $(\mathcal{B}, \phi_{\mathrm{triv}})$ as the \emph{trivial configuration} associated with the GRKBS.

Note that $\mathcal{F}$ is not isometrically isomorphic to $\mathcal{B}$, since $\mathcal{N}_{\phi} \neq \{0\}$. We may thus conclude that, under the conditions discussed above, a GRKBS may admit multiple \emph{distinct} configurations, including at least the pair $(\mathcal{F}, \phi)$ and the trivial configuration $(\mathcal{B}, \phi_{\mathrm{triv}})$. The latter, however, does not provide any additional information beyond what is already contained in the space $\mathcal{B}$ itself.  
\end{remark}
This observation motivates the definition of when two configurations of a GRKBS are considered equivalent.

Before, let us present the following result.
\begin{theorem}\label{teo1}
Consider $\mathcal{B}$ as a GRKBS that possesses a configuration $(\mathcal{F}, \phi)$. Assume, furthermore, that $\widetilde{\mathcal{F}}$ is another Banach space such that
\[
\widetilde{\mathcal{F}} \cong \mathcal{F}.
\]
Then, we have $\widetilde{\mathcal{F}}/\widetilde{\mathcal{N}} \cong \mathcal{B}$, where
\[
\widetilde{\mathcal{N}} = \left\{ \widetilde{\mu} \in \widetilde{\mathcal{F}} : \phi(x)(\mathcal{J}\widetilde{\mu}) = 0,\ \forall x \in \mathcal{X} \right\},
\]
and $\mathcal{J}$ denotes the isometric isomorphism from $\widetilde{\mathcal{F}}$ to $\mathcal{F}$.

In other words, the pair $(\widetilde{\mathcal{F}}, \widetilde{\phi})$, where $\widetilde{\phi}: \mathcal{X} \rightarrow L_{\mathrm{op}}(\widetilde{\mathcal{F}}, \mathit{E})$ is defined by
\[
\widetilde{\phi}(x)(\widetilde{\mu}) = \phi(x)(\mathcal{J}\widetilde{\mu}), \quad \forall (x, \widetilde{\mu}) \in \mathcal{X} \times \widetilde{\mathcal{F}},
\]
also constitutes a configuration of $\mathcal{B}$.
\end{theorem}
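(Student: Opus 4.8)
The plan is to transport the configuration $(\mathcal{F}, \phi)$ across the isometric isomorphism $\mathcal{J}: \widetilde{\mathcal{F}} \to \mathcal{F}$ and then invoke Proposition~\ref{prop3}, which reduces the problem to checking that $\widetilde{\mathcal{F}}/\widetilde{\mathcal{N}} \cong \mathcal{B}$. First I would verify that $\widetilde{\phi}$, as defined by $\widetilde{\phi}(x)(\widetilde{\mu}) = \phi(x)(\mathcal{J}\widetilde{\mu})$, indeed takes values in $L_{\mathrm{op}}(\widetilde{\mathcal{F}}, \mathit{E})$: for each fixed $x$, the map $\widetilde{\phi}(x) = \phi(x) \circ \mathcal{J}$ is a composition of bounded linear operators, and since $\mathcal{J}$ is an isometry we get $\|\widetilde{\phi}(x)\|_{L(\widetilde{\mathcal{F}}, \mathit{E})} = \|\phi(x)\|_{L(\mathcal{F}, \mathit{E})} < \infty$. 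This already shows that the pair $(\widetilde{\mathcal{F}}, \widetilde{\phi})$ is a candidate configuration in the sense of Proposition~\ref{prop2}, namely that it produces a well-defined family of functions $f_{\widetilde{\mu}}(x) = \widetilde{\phi}(x)(\widetilde{\mu})$ with bounded evaluation operators.

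Next I would identify the kernel $\mathcal{N}_{\widetilde{\phi}} = \bigcap_{x \in \mathcal{X}} \ker \widetilde{\phi}(x)$ with the set $\widetilde{\mathcal{N}}$ stated in the theorem. This is immediate from the definition: $\widetilde{\mu} \in \ker\widetilde{\phi}(x)$ for all $x$ iff $\phi(x)(\mathcal{J}\widetilde{\mu}) = 0$ for all $x$, which is exactly the defining condition of $\widetilde{\mathcal{N}}$. Equivalently, $\widetilde{\mathcal{N}} = \mathcal{J}^{-1}(\mathcal{N}_{\phi})$, so $\widetilde{\mathcal{N}}$ is a closed subspace of $\widetilde{\mathcal{F}}$ because $\mathcal{N}_{\phi}$ is closed and $\mathcal{J}$ is a homeomorphism.

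The core step is then to show $\widetilde{\mathcal{F}}/\widetilde{\mathcal{N}} \cong \mathcal{F}/\mathcal{N}_{\phi}$, after which the conclusion $\widetilde{\mathcal{F}}/\widetilde{\mathcal{N}} \cong \mathcal{B}$ follows by composing with the isometric isomorphism $\phi_* : \mathcal{F}/\mathcal{N}_{\phi} \to \mathcal{B}$ from Proposition~\ref{prop2}. Since $\mathcal{J}(\widetilde{\mathcal{N}}) = \mathcal{N}_{\phi}$, the map $\mathcal{J}$ descends to a well-defined linear bijection $\bar{\mathcal{J}}: \widetilde{\mathcal{F}}/\widetilde{\mathcal{N}} \to \mathcal{F}/\mathcal{N}_{\phi}$ given by $\bar{\mathcal{J}}(\widetilde{\mu} + \widetilde{\mathcal{N}}) = \mathcal{J}\widetilde{\mu} + \mathcal{N}_{\phi}$; that this is an isometry follows from the quotient norm formula together with the fact that $\mathcal{J}$ is an isometry mapping the coset $\widetilde{\mu} + \widetilde{\mathcal{N}}$ bijectively onto the coset $\mathcal{J}\widetilde{\mu} + \mathcal{N}_{\phi}$, so the infima defining the two quotient norms coincide. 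Finally, I would observe that this identification is compatible with the two configuration maps, i.e. $\phi_* \circ \bar{\mathcal{J}}$ sends $\widetilde{\mu} + \widetilde{\mathcal{N}}$ to $f_{\mathcal{J}\widetilde{\mu}} = f_{\widetilde{\mu}}$ (since $f_{\widetilde{\mu}}(x) = \widetilde{\phi}(x)(\widetilde{\mu}) = \phi(x)(\mathcal{J}\widetilde{\mu}) = f_{\mathcal{J}\widetilde{\mu}}(x)$), which confirms that the function space generated by $(\widetilde{\mathcal{F}}, \widetilde{\phi})$ is literally $\mathcal{B}$ and that the norm $\|f\|_{\mathcal{B}} = \inf\{\|\widetilde{\mu}\|_{\widetilde{\mathcal{F}}} : f = f_{\widetilde{\mu}}\}$ is recovered. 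Then Proposition~\ref{prop3} gives that $(\widetilde{\mathcal{F}}, \widetilde{\phi})$ is a configuration of $\mathcal{B}$.

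I do not anticipate a serious obstacle here; the only point requiring a little care is the bookkeeping in the quotient-norm computation showing $\bar{\mathcal{J}}$ is an isometry — specifically, making explicit that $\mathcal{J}$ carries the coset $\widetilde{\mu} + \widetilde{\mathcal{N}}$ onto $\mathcal{J}\widetilde{\mu} + \mathcal{N}_{\phi}$ \emph{as sets}, so that $\inf_{\widetilde{\nu} \in \widetilde{\mu} + \widetilde{\mathcal{N}}} \|\widetilde{\nu}\|_{\widetilde{\mathcal{F}}} = \inf_{\nu \in \mathcal{J}\widetilde{\mu} + \mathcal{N}_{\phi}} \|\nu\|_{\mathcal{F}}$. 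This is where the hypothesis that $\mathcal{J}$ is an \emph{isometric} isomorphism (not merely a bounded isomorphism) is essential, and it is worth stating it cleanly rather than leaving it implicit.
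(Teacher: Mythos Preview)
Your proposal is correct and follows essentially the same route as the paper: verify that $\widetilde{\phi}(x)=\phi(x)\circ\mathcal{J}\in L_{\mathrm{op}}(\widetilde{\mathcal{F}},E)$, identify $\widetilde{\mathcal{N}}=\mathcal{J}^{-1}(\mathcal{N}_{\phi})$, and show that the induced map $\bar{\mathcal{J}}:\widetilde{\mathcal{F}}/\widetilde{\mathcal{N}}\to\mathcal{F}/\mathcal{N}_{\phi}$ is an isometric isomorphism by comparing the two infima defining the quotient norms, then compose with $\phi_{*}$. Your additional check that $(\widetilde{\mathcal{F}},\widetilde{\phi})$ literally reproduces the functions and norm of $\mathcal{B}$ is a welcome clarification that the paper leaves implicit.
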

\begin{proof}
Initially, observe that it is straightforward to verify that $\widetilde{\phi}$ defines a map from $\mathcal{X}$ to $L_{\mathrm{op}}(\widetilde{\mathcal{F}}, \mathit{E})$. The isometric isomorphism $\mathcal{J}$ induces an isometry between the closed subspaces $\widetilde{\mathcal{N}}$ and
\[
\mathcal{N} = \left\{\mu \in \mathcal{F} : \phi(x)(\mu) = 0,\ \forall x \in \mathcal{X}\right\},
\]
such that $\mathcal{N} = \mathcal{J}(\widetilde{\mathcal{N}})$. Consequently, $\mathcal{J}$ induces an isomorphism, also denoted by $\mathcal{J}$, between the quotient spaces $\widetilde{\mathcal{F}}/\widetilde{\mathcal{N}}$ and $\mathcal{F}/\mathcal{N}$, defined by
\[
\mathcal{J}(\widetilde{\mu} + \widetilde{\mathcal{N}}) = \mathcal{J}(\widetilde{\mu}) + \mathcal{N}, \quad \forall\ \widetilde{\mu} \in \widetilde{\mathcal{F}}.
\]

We now show that this isomorphism is, in fact, an isometry. Since $\mathcal{J}$ is an isometry from $\widetilde{\mathcal{F}}$ to $\mathcal{F}$ and $\mathcal{N} = \mathcal{J}(\widetilde{\mathcal{N}})$, it follows that
\[
\left\{ \| \bar{\mu} + \mathcal{J}(\widetilde{\mu}) \|_{\mathcal{F}} : \bar{\mu} \in \mathcal{N} \right\} = \left\{ \| \mu + \widetilde{\mu} \|_{\widetilde{\mathcal{F}}} : \mu \in \widetilde{\mathcal{N}} \right\}, \quad \forall\ \widetilde{\mu} \in \widetilde{\mathcal{F}}.
\]
Hence, these two sets of real numbers are equal, and thus their infima coincide. That is,
\[
\| \mathcal{J}(\widetilde{\mu}) + \mathcal{N} \|_{\mathcal{F}/\mathcal{N}} = \inf_{\bar{\mu} \in \mathcal{N}} \| \bar{\mu} + \mathcal{J}(\widetilde{\mu}) \|_{\mathcal{F}} = \inf_{\mu \in \widetilde{\mathcal{N}}} \| \mu + \widetilde{\mu} \|_{\widetilde{\mathcal{F}}} = \| \widetilde{\mu} + \widetilde{\mathcal{N}} \|_{\widetilde{\mathcal{F}}/\widetilde{\mathcal{N}}}.
\]

Therefore, we have established that
\[
\widetilde{\mathcal{F}}/\widetilde{\mathcal{N}} \cong \mathcal{F}/\mathcal{N} \cong \mathcal{B},
\]
thereby completing the proof.
\end{proof}

This result establishes a relationship between two configurations of the same GRKBS. In this context, Theorem~\ref{teo1} motivates the following definition.

\begin{definition}
Let $(\mathcal{F}, \phi)$ and $(\widetilde{\mathcal{F}}, \widetilde{\phi})$ be two configurations of a GRKBS $\mathcal{B}$. We say that these pairs are \emph{equivalent}, denoted by $(\mathcal{F}, \phi) \equiv (\widetilde{\mathcal{F}}, \widetilde{\phi})$, if and only if
\[
\mathcal{F} \cong \widetilde{\mathcal{F}} \quad \text{and} \quad \widetilde{\phi}(x)(\widetilde{\mu}) = \phi(x)(\mathcal{J}\widetilde{\mu}), \quad \forall\, x \in \mathcal{X},\; \widetilde{\mu} \in \widetilde{\mathcal{F}},
\]
where $\mathcal{J}: \widetilde{\mathcal{F}} \rightarrow \mathcal{F}$ is an isometric isomorphism.
\end{definition}

It is straightforward to verify that $\equiv$ defines an equivalence relation, which partitions the set of all configurations of a GRKBS into equivalence classes. As mentioned earlier, if $(\mathcal{F}, \phi)$ is a configuration of $\mathcal{B}$ such that $\mathcal{N}_{\phi} \neq \{0\}$, then the quotient set contains at least two distinct equivalence classes: the class of $(\mathcal{F}, \phi)$, and the trivial class, whose representative is $(\mathcal{B}, \phi_{\mathrm{triv}})$.

We now proceed to prove that, although a GRKBS may admit multiple equivalence classes of configurations, the significance of the reproducing kernel lies in its uniqueness—understood in a specific sense. This phenomenon is analogous to what occurs in the context of Reproducing Kernel Hilbert Spaces (RKHS) and Reproducing Kernel Banach Spaces (RKBS), as discussed in \cite{Xu2019}.

\begin{theorem}\label{teo3}
Let $\mathcal{B}$ be a GRKBS with two configurations $(\mathcal{F}_1, \phi_1)$ and $(\mathcal{F}_2, \phi_2)$. Then, the following statements hold:
\begin{itemize}
\item The pairs $\left(\mathcal{F}_1/\mathcal{N}_{\phi_{\mathcal{K}_1}}, \phi_{\mathcal{K}_1}\right)$ and $\left(\mathcal{F}_2/\mathcal{N}_{\phi_{\mathcal{K}_2}}, \phi_{\mathcal{K}_2}\right)$,  
where
\[
\phi_{\mathcal{K}_i}: \mathcal{X} \rightarrow L\left(\mathcal{F}_i/\mathcal{N}_{\phi_{\mathcal{K}_i}}, \mathit{E}\right) \quad \text{are defined by} \quad \phi_{\mathcal{K}_i}(x) = \mathcal{K}_i(x, \cdot),
\]
and where $\mathcal{K}_i$ for $i = 1, 2$ are the generalized reproducing kernels associated with each configuration, both belong to the trivial equivalence class.

\item In particular, this implies that for all $\mu_1 + \mathcal{N}_{\phi_{\mathcal{K}_1}} \in \mathcal{F}_1/\mathcal{N}_{\phi_{\mathcal{K}_1}},$
\[
\mathcal{K}_1\left(x, \mu_1 + \mathcal{N}_{\phi_{\mathcal{K}_1}}\right) = \mathcal{K}_2\left(x, \phi^{(2)-1}_{*} \circ \phi^{(1)}_{*} \left(\mu_1 + \mathcal{N}_{\phi_{\mathcal{K}_1}}\right)\right), \quad \forall x \in \mathcal{X},
\]
where $\phi^{(1)}_{*}$ and $\phi^{(2)}_{*}$ denote the isometries from $\mathcal{F}_1/\mathcal{N}_{\phi_{\mathcal{K}_1}}$ and $\mathcal{F}_2/\mathcal{N}_{\phi_{\mathcal{K}_2}},$ respectively, to $\mathcal{B}$.
\end{itemize}
\end{theorem}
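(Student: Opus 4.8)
The plan is to show that each configuration $(\mathcal{F}_i, \phi_i)$ of $\mathcal{B}$ passes through its reproducing kernel to land in the trivial equivalence class, and then to combine the two resulting identifications with $\mathcal{B}$ to get the explicit relation between $\mathcal{K}_1$ and $\mathcal{K}_2$. The key observation is that the quotient construction of Proposition~\ref{prop2} is idempotent: once we pass from $\mathcal{F}_i$ to $\mathcal{F}_i/\mathcal{N}_{\phi_i}$ and use the reproducing kernel $\mathcal{K}_i$ as the new configuration map, the associated null space becomes trivial, because by Theorem~\ref{Teo2} we have $\mathcal{K}_i(x,\cdot) = \phi^{(i)}_{*}(\cdot)(x)$ and $\phi^{(i)}_{*}$ is an isometric isomorphism onto $\mathcal{B}$, so $\mathcal{K}_i(x, \mu + \mathcal{N}_{\phi_i}) = 0$ for all $x$ forces $\phi^{(i)}_{*}(\mu + \mathcal{N}_{\phi_i}) = 0$, hence $\mu + \mathcal{N}_{\phi_i} = 0$. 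Therefore $\mathcal{N}_{\phi_{\mathcal{K}_i}} = \{0\}$ and $\mathcal{F}_i/\mathcal{N}_{\phi_{\mathcal{K}_i}} \cong \mathcal{B}$ via $\phi^{(i)}_{*}$.

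First I would make precise what ``belonging to the trivial equivalence class'' means here: we must exhibit an isometric isomorphism $\mathcal{J}_i : \mathcal{F}_i/\mathcal{N}_{\phi_{\mathcal{K}_i}} \to \mathcal{B}$ and verify $\phi_{\mathrm{triv}}(x)(\mathcal{J}_i \nu) = \phi_{\mathcal{K}_i}(x)(\nu)$ for all $x \in \mathcal{X}$ and $\nu \in \mathcal{F}_i/\mathcal{N}_{\phi_{\mathcal{K}_i}}$. I would take $\mathcal{J}_i = \phi^{(i)}_{*}$, which is an isometric isomorphism by Proposition~\ref{prop2}. The required compatibility is then exactly the reproducing identity from Theorem~\ref{Teo2}: for $\nu = \mu + \mathcal{N}_{\phi_i}$,
\[
\phi_{\mathrm{triv}}(x)\bigl(\phi^{(i)}_{*}(\nu)\bigr) = \mathrm{ev}_x\bigl(f_{\mu}\bigr) = f_{\mu}(x) = \phi_i(x)(\mu) = \mathcal{K}_i(x,\nu) = \phi_{\mathcal{K}_i}(x)(\nu).
\]
This establishes the first bullet for both $i = 1$ and $i = 2$, and since $\equiv$ is an equivalence relation, the two configurations $(\mathcal{F}_i/\mathcal{N}_{\phi_{\mathcal{K}_i}}, \phi_{\mathcal{K}_i})$ are equivalent to each other as well.

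For the second bullet, I would compose the two identifications. Both $\phi^{(1)}_{*}$ and $\phi^{(2)}_{*}$ are isometric isomorphisms onto $\mathcal{B}$, so $\Psi := (\phi^{(2)}_{*})^{-1} \circ \phi^{(1)}_{*}$ is an isometric isomorphism from $\mathcal{F}_1/\mathcal{N}_{\phi_{\mathcal{K}_1}}$ onto $\mathcal{F}_2/\mathcal{N}_{\phi_{\mathcal{K}_2}}$. Given $\nu_1 = \mu_1 + \mathcal{N}_{\phi_{\mathcal{K}_1}}$, write $f = \phi^{(1)}_{*}(\nu_1) \in \mathcal{B}$; then by Theorem~\ref{Teo2} applied to each configuration, $\mathcal{K}_1(x,\nu_1) = f(x)$ and also $\mathcal{K}_2(x, \Psi(\nu_1)) = \phi^{(2)}_{*}(\Psi(\nu_1))(x) = f(x)$, so the two sides agree for every $x \in \mathcal{X}$, which is precisely the displayed formula.

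I do not anticipate a serious obstacle: the entire argument is bookkeeping around the isometries produced in Propositions~\ref{prop2} and \ref{prop4} and the reproducing identity of Theorem~\ref{Teo2}. The one point requiring minor care is the idempotency step — checking that $\mathcal{N}_{\phi_{\mathcal{K}_i}} = \{0\}$ — since it relies on $\phi^{(i)}_{*}$ being injective on the quotient, which is exactly the content of its being an isometric isomorphism rather than merely surjective; I would state this explicitly rather than leave it implicit. A secondary subtlety is notational consistency: the theorem writes $\mathcal{F}_i/\mathcal{N}_{\phi_{\mathcal{K}_i}}$, and one should confirm this equals $\mathcal{F}_i/\mathcal{N}_{\phi_i}$ as used in Theorem~\ref{Teo2}, i.e. that the null space of the kernel-as-configuration-map on $\mathcal{F}_i$ coincides with $\mathcal{N}_{\phi_i}$ — which again follows directly from $\mathcal{K}_i(x, \mu + \mathcal{N}_{\phi_i}) = \phi_i(x)(\mu)$.
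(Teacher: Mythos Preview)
Your proposal is correct and follows essentially the same route as the paper: both arguments hinge on the reproducing identity $\mathcal{K}_i(x,\mu+\mathcal{N}_{\phi_i})=f_\mu(x)$ from Theorem~\ref{Teo2} together with the characteristic isometries $\phi^{(i)}_{*}$ from Proposition~\ref{prop2}, and then compose $\Psi=(\phi^{(2)}_{*})^{-1}\circ\phi^{(1)}_{*}$ to pass between the two kernels. The only organizational difference is that the paper establishes the second bullet first (the kernel identity via $f_{\mu_1}=f_{\mu_2}$) and then rewrites it as the equivalence $\phi_{\mathcal{K}_2}(x)\circ\Psi=\phi_{\mathcal{K}_1}(x)$, whereas you first verify directly that each $(\mathcal{F}_i/\mathcal{N}_{\phi_{\mathcal{K}_i}},\phi_{\mathcal{K}_i})\equiv(\mathcal{B},\phi_{\mathrm{triv}})$ and then deduce the kernel identity; your explicit check that $\mathcal{N}_{\phi_{\mathcal{K}_i}}=\{0\}$ and the notational reconciliation with $\mathcal{N}_{\phi_i}$ are welcome clarifications that the paper leaves implicit.
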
 
\begin{proof}
As is well known, if $(\mathcal{F}_1, \phi_1)$ and $(\mathcal{F}_2, \phi_2)$ are configurations of a GRKBS $\mathcal{B}$, then $\mathcal{F}_1/\mathcal{N}_{\phi_{\mathcal{K}_1}} \cong \mathcal{F}_2/\mathcal{N}_2$, since both are isometrically isomorphic to $\mathcal{B}$. In this case, the corresponding isometry is given by $\phi^{(2)-1}_{*} \circ \phi^{(1)}_{*}$. 

Accordingly, for every $\mu_1 + \mathcal{N}_{\phi_{\mathcal{K}_1}} \in \mathcal{F}_1/\mathcal{N}_{\phi_{\mathcal{K}_1}}$, there exists $\mu_2 + \mathcal{N}_{\phi_{\mathcal{K}_2}} \in \mathcal{F}_2/\mathcal{N}_{\phi_{\mathcal{K}_2}}$ such that
\[
\phi^{(2)-1}_{*} \circ \phi^{(1)}_{*} \left(\mu_1 + \mathcal{N}_{\phi_{\mathcal{K}_1}}\right) = \mu_2 + \mathcal{N}_{\phi_{\mathcal{K}_2}} \;\; \Rightarrow \;\; \phi^{(1)}_{*}\left(\mu_1 + \mathcal{N}_{\phi_{\mathcal{K}_1}}\right) = \phi^{(2)}_{*}\left(\mu_2 + \mathcal{N}_{\phi_{\mathcal{K}_2}}\right) \;\; \Rightarrow \;\; f_{\mu_1} = f_{\mu_2}.
\]

Taking this into account and applying the reproducing property of the generalized reproducing kernel, we obtain:
\begin{align*}
\mathcal{K}_2\left(x, \phi^{(2)-1}_{*} \circ \phi^{(1)}_{*}\left(\mu_1 + \mathcal{N}_{\phi_{\mathcal{K}_1}}\right)\right) 
&= \mathcal{K}_2\left(x, \mu_2 + \mathcal{N}_{\phi_{\mathcal{K}_2}}\right) = f_{\mu_2}(x) \\
&= f_{\mu_1}(x) = \mathcal{K}_1\left(x, \mu_1 + \mathcal{N}_{\phi_{\mathcal{K}_1}}\right), \quad \forall x \in \mathcal{X},\; \mu_1 + \mathcal{N}_{\phi_{\mathcal{K}_1}} \in \mathcal{F}_1/\mathcal{N}_{\phi_{\mathcal{K}_1}}.
\end{align*}

Through this equality, we have established the second point of the theorem. Moreover, it can be equivalently written as:
\[
\phi_{\mathcal{K}_2}(x)\left(\phi^{(2)-1}_{*} \circ \phi^{(1)}_{*}\left(\mu_1 + \mathcal{N}_{\phi_{\mathcal{K}_1}}\right)\right) = \phi_{\mathcal{K}_1}(x)\left(\mu_1 + \mathcal{N}_{\phi_{\mathcal{K}_1}}\right),
\]
which proves the first point of the theorem. 
\end{proof}

It should be noted that this result implies that, up to isometries, the generalized reproducing kernel associated with the corresponding GRKBS is unique. On the other hand, the generalized reproducing kernel is associated with generating pairs that belong to the trivial class; however, any generating pair $(\mathcal{F}, \phi)$ such that $\mathcal{N}_{\phi} \neq 0$ belongs to a different class. That is, the definition of GRKBS based on generating pairs is broader than that which merely considers reproducing kernels, as in \cite{Xu2019}.

In the next theorem, we will provide a characterization of the generator pairs of a GRKBS that belong to the same equivalence class.
\begin{theorem}\label{teo4}
Let $\left(\mathcal{F}_1, \phi_1\right)$ and $\left(\mathcal{F}_2, \phi_2\right)$ be two configurations of a GRKBS $\mathcal{B}$ such that $\mathcal{F}_1 \cong \mathcal{F}_2$. These configurations belong to the same class if and only if the following two conditions hold:
\begin{itemize}
\item $\mathcal{J}\mathcal{N}_{\phi_1} = \mathcal{N}_{\phi_2}$, where $\mathcal{J}$ is the isometric isomorphism between $\mathcal{F}_1$ and $\mathcal{F}_2$.
\item The following diagram is commutative:
\begin{align*}
\begin{array}{ccc}
\mathcal{F}_1 / \mathcal{N}_{\phi_1} & \xrightarrow{\mathcal{J}^{(c)}} & \mathcal{F}_2 / \mathcal{N}_{\phi_2}  \\
 \phi^{(1)}_{*} \downarrow  &  & \downarrow \phi^{(2)}_{*} \\
\mathcal{B} & \xrightarrow{I_d} & \mathcal{B}
\end{array}
\end{align*}
that is, $\phi^{(2)}_{*} \circ \mathcal{J}^{(c)} \circ \phi^{(1)-1}_{*} = I_d$, where $\mathcal{J}^{(c)}: \mathcal{F}_1 / \mathcal{N}_{\phi_1} \rightarrow \mathcal{F}_2 / \mathcal{N}_{\phi_2}$ is the isometry induced by $\mathcal{J}$, such that $\mathcal{J}^{(c)}\left(\mu_1 + \mathcal{N}_{\phi_1}\right) = \mathcal{J}(\mu_1) + \mathcal{N}_{\phi_2}$.
\end{itemize}
\end{theorem}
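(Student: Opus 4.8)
The plan is to prove both implications by unwinding two things: the definition of the equivalence relation $\equiv$, and the construction of the isometry $\phi^{(i)}_{*}: \mathcal{F}_i/\mathcal{N}_{\phi_i} \to \mathcal{B}$ from the proof of Proposition~\ref{prop2}, for which we recall that $\phi^{(i)}_{*}(\mu_i + \mathcal{N}_{\phi_i}) = f_{\mu_i}$ and $f_{\mu_i}(x) = \phi_i(x)(\mu_i)$ for every $x \in \mathcal{X}$. The key preliminary observation is that, after adjusting the direction of the isometric isomorphism (the definition of $\equiv$ phrases it as a map $\widetilde{\mathcal{F}} \to \mathcal{F}$, while the theorem uses $\mathcal{J}: \mathcal{F}_1 \to \mathcal{F}_2$, so one invokes the symmetry of $\equiv$ or replaces $\mathcal{J}$ by $\mathcal{J}^{-1}$), the assertion $(\mathcal{F}_1, \phi_1) \equiv (\mathcal{F}_2, \phi_2)$ is equivalent to the single \emph{intertwining identity}
\[
\phi_1(x)(\mu_1) = \phi_2(x)(\mathcal{J}\mu_1), \qquad \forall\, x \in \mathcal{X},\ \mu_1 \in \mathcal{F}_1,
\]
where $\mathcal{J}: \mathcal{F}_1 \to \mathcal{F}_2$ is an isometric isomorphism. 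The whole proof then reduces to showing that this identity is equivalent to the conjunction of Conditions~1 and~2.

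For the forward implication, I would start from the intertwining identity. Condition~1 is then immediate: $\mu_1 \in \mathcal{N}_{\phi_1}$ means $\phi_1(x)(\mu_1)=0$ for all $x$, which by the identity is equivalent to $\phi_2(x)(\mathcal{J}\mu_1)=0$ for all $x$, i.e. $\mathcal{J}\mu_1 \in \mathcal{N}_{\phi_2}$; bijectivity of $\mathcal{J}$ then gives $\mathcal{J}\mathcal{N}_{\phi_1} = \mathcal{N}_{\phi_2}$. This in turn makes the induced quotient map $\mathcal{J}^{(c)}(\mu_1 + \mathcal{N}_{\phi_1}) = \mathcal{J}\mu_1 + \mathcal{N}_{\phi_2}$ well defined. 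For Condition~2, take an arbitrary $f = f_{\mu_1} \in \mathcal{B}$; then $\phi^{(1)-1}_{*}(f) = \mu_1 + \mathcal{N}_{\phi_1}$, $\mathcal{J}^{(c)}$ sends it to $\mathcal{J}\mu_1 + \mathcal{N}_{\phi_2}$, and $\phi^{(2)}_{*}(\mathcal{J}\mu_1 + \mathcal{N}_{\phi_2}) = f_{\mathcal{J}\mu_1}$, whose value at $x$ is $\phi_2(x)(\mathcal{J}\mu_1) = \phi_1(x)(\mu_1) = f(x)$. Hence $\phi^{(2)}_{*}\circ \mathcal{J}^{(c)} \circ \phi^{(1)-1}_{*} = I_d$.

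For the converse, assume Conditions~1 and~2. Condition~1 is precisely what guarantees $\mathcal{J}^{(c)}$ is well defined, so Condition~2 may be rewritten as $\phi^{(2)}_{*} \circ \mathcal{J}^{(c)} = \phi^{(1)}_{*}$. Applying both sides to $\mu_1 + \mathcal{N}_{\phi_1}$ and evaluating at $x$ gives $f_{\mathcal{J}\mu_1}(x) = f_{\mu_1}(x)$, that is $\phi_2(x)(\mathcal{J}\mu_1) = \phi_1(x)(\mu_1)$ for all $x \in \mathcal{X}$ and all $\mu_1 \in \mathcal{F}_1$. This is the intertwining identity, whence $(\mathcal{F}_1, \phi_1) \equiv (\mathcal{F}_2, \phi_2)$ with the isometric isomorphism $\mathcal{J}$ (or $\mathcal{J}^{-1}$, per the direction convention in the definition).

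I expect the pointwise evaluations to be entirely routine; the one place that genuinely needs care is the logical ordering in the converse direction, where Condition~1 is not an independent extra hypothesis but exactly what renders $\mathcal{J}^{(c)}$, and therefore the diagram of Condition~2, meaningful. The only other nuisance is the bookkeeping of the direction of the isometric isomorphism between the definition of $\equiv$ and the statement of the theorem, which is handled once and for all by the symmetry of $\equiv$.
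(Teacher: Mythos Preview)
Your proposal is correct and follows essentially the same approach as the paper: both directions are obtained by translating between the intertwining identity $\phi_1(x)(\mu_1)=\phi_2(x)(\mathcal{J}\mu_1)$ and the pair of conditions via the explicit description $\phi^{(i)}_{*}(\mu_i+\mathcal{N}_{\phi_i})=f_{\mu_i}$. The only cosmetic differences are that the paper proves the converse first and includes an explicit verification that $\mathcal{J}^{(c)}$ is an isometry (which, as you note, is not needed for the argument once well-definedness is secured by Condition~1).
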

\begin{proof}
First, let us assume that the two conditions of the theorem hold. Since $\mathcal{J}\mathcal{N}_{\phi_1} = \mathcal{N}_{\phi_2}$, the isomorphism $\mathcal{J}^{(c)}: \mathcal{F}_1 / \mathcal{N}_{\phi_1} \rightarrow \mathcal{F}_2 / \mathcal{N}_{\phi_2}$ induced by $\mathcal{J}$ is an isometry. Indeed, we have
\begin{align*}
\|\mathcal{J}^{(c)}\left(\mu_1 + \mathcal{N}_{\phi_1}\right)\|_{\mathcal{F}_2 / \mathcal{N}_{\phi_2}} &= \inf_{\nu_2 \in \mathcal{N}_{\phi_2}}\|\mathcal{J}(\mu_1) + \nu_2\|_{\mathcal{F}_2} = \inf_{\nu_1 \in \mathcal{N}_{\phi_1}}\|\mathcal{J}(\mu_1 + \nu_1)\|_{\mathcal{F}_2} \\
&= \inf_{\nu_1 \in \mathcal{N}_{\phi_1}}\|\mu_1 + \nu_1\|_{\mathcal{F}_1} = \|\mu_1 + \mathcal{N}_{\phi_1}\|_{\mathcal{F}_1 / \mathcal{N}_{\phi_1}}.
\end{align*}
In the above, we used the fact that $\mathcal{J}\mathcal{N}_{\phi_1} = \mathcal{N}_{\phi_2}$ for the second equality and that $\mathcal{J}$ is an isometry for the third equality. 

On the other hand, let $\mu_1 \in \mathcal{F}_1$ and $\mu_2 \in \mathcal{F}_2$ such that $\mu_2 = \mathcal{J}\mu_1$. Then, the following chain of equivalences holds:
\begin{align*}
&\mu_2 + \mathcal{N}_{\phi_{\mathcal{K}_2}} = \mathcal{J}^{(c)}\left(\mu_1 + \mathcal{N}_{\phi_1}\right) = \phi^{(2)-1}_{*}\circ \phi^{(1)}_{*}\left(\mu_1 + \mathcal{N}_{\phi_1}\right) \\
&\Leftrightarrow \; \phi^{(2)}_{*}\left(\mu_2 + \mathcal{N}_2\right) = \phi^{(1)}_{*}\left(\mu_1 + \mathcal{N}_{\phi_1}\right) \Leftrightarrow \; f_{\mu_1} = f_{\mu_2} \Leftrightarrow \; f_{\mu_2}(x) = f_{\mu_1}(x), \forall x \in \mathcal{X} \\
&\Leftrightarrow \; \phi_2(x)(\mu_2) = \phi_1(x)(\mu_1), \forall x \in \mathcal{X}.
\end{align*}
Thus, taking into account the second condition of the theorem and that $\mu_2 = \mathcal{J}\mu_1$, the last equality implies that 
\begin{align*}
\phi_1(x)(\mu_1) = \phi_2(x)(\mathcal{J}\mu_1), \forall x \in \mathcal{X}, \mu_1 \in \mathcal{F}_1.
\end{align*}
Since we have assumed in the theorem's hypothesis that $\mathcal{F}_1 \cong \mathcal{F}_2$, the last equality ultimately proves that the generator pairs $(\mathcal{F}_1, \phi_1)$ and $(\mathcal{F}_2, \phi_2)$ belong to the same equivalence class. 

Now, we need to prove that if $(\mathcal{F}_1, \phi_1)$ and $(\mathcal{F}_2, \phi_2)$ are in the same equivalence class, then the two conditions of the theorem hold. 

Let us see: if $\mu_1 \in \mathcal{N}_{\phi_1}$, then 
\[
\phi_1(x)(\mu_1) = \phi_2(x)(\mathcal{J}\mu_1) = 0,
\]
which means that $\mathcal{J}\mu_1 \in \mathcal{N}_2$. We have shown that $\mathcal{J}\mathcal{N}_{\phi_1} \subset \mathcal{N}_2$. 

Similarly, if $\mu_2 \in \mathcal{N}_2$, there exists $\mu_1 \in \mathcal{F}_1$ such that $\mu_2 = \mathcal{J}\mu_1$. Moreover, given the equivalence of the generator pairs, we have
\[
\phi_1(x)(\mu_1) = \phi_2(x)(\mathcal{J}\mu_1) = \phi_2(x)(\mu_2) = 0 \;\; \Rightarrow \;\; \mu_1 \in \mathcal{N}_{\phi_1},
\]
which implies $\mathcal{N}_2 \subset \mathcal{J}\mathcal{N}_{\phi_1}$. We have now proven that $\mathcal{N}_2 = \mathcal{J}\mathcal{N}_{\phi_1}$. 

Finally, let $f \in \mathcal{B}$, $\mu_2 \in \mathcal{F}_2$, and $\mu_1 \in \mathcal{F}_1$ such that $\mu_2 = \mathcal{J}\mu_1$, $\left(\mu_2 + \mathcal{N}_2 = \mathcal{J}^{(c)}(\mu_1 + \mathcal{N}_{\phi_1})\right)$, and $f = f_{\mu_2}$. Given that the pairs $(\mathcal{F}_1, \phi_1)$ and $(\mathcal{F}_2, \phi_2)$ are equivalent, we have 
\[
f = f_{\mu_2} = f_{\mu_1}.
\]

From the above, we obtain 
\begin{align}\label{eqn4}
f_{\mu_1} = f_{\mu_2} \Leftrightarrow \phi^{(1)}_{*}(\mu_1 + \mathcal{N}_{\phi_1}) = \phi^{(2)}_{*}\left(\mu_2 + \mathcal{N}_2\right) = \phi^{(2)}_{*}(\mathcal{J}^{(c)}(\mu_1 + \mathcal{N}_{\phi_1})),
\end{align}
and since $f = \phi^{(1)}_{*}(\mu_1 + \mathcal{N}_{\phi_1}) \Leftrightarrow \mu_1 + \mathcal{N}_{\phi_1} = \phi^{(1)-1}_{*}(f)$, we obtain
\begin{align*}
f = \phi^{(2)}_{*}(\mu_2 + \mathcal{N}_2) = \phi^{(2)}_{*}(\mathcal{J}^{(c)}(\phi^{(1)-1}_{*}(f))), \forall f \in \mathcal{B},
\end{align*}
which implies that $\phi^{(2)}_{*} \circ \mathcal{J}^{(c)} \circ \phi^{(1)-1}_{*} = I_d$.
\end{proof}

Theorems~\ref{teo3} and~\ref{teo4} highlight the fact that the role played by configuration pairs is substantially broader than that of the reproducing kernel itself, which is associated with a specific class of generators: the trivial class. 

However, there exist generating pairs that do not belong to this class and, consequently, are not associated with any reproducing kernel. For instance, if $(\mathcal{F}, \phi)$ is a configuration such that $\mathcal{N}_{\phi} \neq 0$, then this pair does not belong to the trivial class—namely, the class corresponding to the generalized reproducing kernel.

\subsection{Abstract neural networks}\label{sec:AbsNN}
An interesting feature of the framework proposed in this work—one that also has significant implications for connecting these spaces with deep neural networks—is the ability to construct what we refer to as \emph{composition structures} from two or more GRKBSs.

The key idea is to translate into this abstract context the notion that deep architectures, composed of multiple layers, can be interpreted as a kind of composition of single-hidden-layer neural networks. To achieve this, it is necessary to construct functional structures by appropriately arranging multiple GRKBSs.

The main challenge lies in the fact that it is not straightforward to identify an operation between GRKBSs that yields another GRKBS. However, as we will demonstrate in this section, such an operation is not required. It is possible to define a composition operation involving two or more GRKBSs that results in a family of GRKBSs with properties analogous to those found in deep neural network architectures developed in the field of Machine Learning. 

Before proceeding with this section, we introduce a clarification regarding the notation that will be used henceforth: we denote by $\mathcal{B}(\mathcal{X}, E)$ a GRKBS consisting of functions $f: \mathcal{X} \rightarrow E$. Furthermore, if $(\mathcal{F}, \phi)$ is a configuration of $\mathcal{B}(\mathcal{X}, E)$, we refer to the isometric isomorphism between
\[
\mathcal{B}(\mathcal{X}, E) \;\; \text{and} \;\; \mathcal{F}/\mathcal{N}_{\phi}
\]
as the \emph{characteristic isometry}.

Let us consider two GRKBSs, $\mathcal{B}_0\left(\mathcal{X}_0, E_0\right)$ and $\mathcal{B}_1(E_0, E_1)$, with configurations $(\mathcal{F}_0, \phi_0)$ and $(\mathcal{F}_1, \phi_1)$, respectively. We say that these two spaces are \textbf{\textit{nested}} in the sense that the domain of the functions in $\mathcal{B}_1$ coincides with the codomain of the functions in $\mathcal{B}_0$. We aim to construct a structure from these two spaces. Consequently, a natural and logical starting point is to consider the space of composed functions $g \circ f: \mathcal{X}_0 \rightarrow E_1$, where $f \in \mathcal{B}_0\left(\mathcal{X}_0, E_0\right)$ and $g \in \mathcal{B}_1(E_0, E_1)$. 

However, this set does not form a GRKBS, since it is not possible to define a vector space structure on it using pointwise addition and scalar multiplication. In particular, if we consider two functions $g_0 \circ f_0$ and $g_1 \circ f_1$, where $f_0, f_1 \in \mathcal{B}_0\left(\mathcal{X}_0, E_0\right)$ and $g_0, g_1 \in \mathcal{B}_1\left(E_0, E_1\right)$, the pointwise sum $g_0 \circ f_0 + g_1 \circ f_1$ does not necessarily belong to the set in question.

To address this issue, we propose fixing the inner function in the composition and treating it as a parameter. In this sense, for every $\mu \in \mathcal{F}_0$, we can define the family of GRKBSs $\mathcal{B}_{\mu}(\mathcal{X}_0, E_1)$ with configuration $(\mathcal{F}_1, \phi_{\mu})$, where $\phi_{\mu}: \mathcal{X}_0 \rightarrow L_{\mathrm{op}}(\mathcal{F}_1, E_1)$ is given by
\[
\phi_{\mu}(x)(\nu) = \phi_1\left(\phi_0(x)(\mu)\right)(\nu), \quad \forall x \in \mathcal{X}_0,\; \nu \in \mathcal{F}_1,
\]
where $\phi_0$ and $\phi_1$ are the configuration maps of $\mathcal{B}_0\left(\mathcal{X}_0, E_0\right)$ and $\mathcal{B}_1\left(E_0, E_1\right)$, respectively. Observe that the elements of the family $\left\{\mathcal{B}_{\mu}(\mathcal{X}_0, E_1)\right\}_{\mu \in \mathcal{F}_0}$ are well defined, since $\phi_{\mu}(x)$ indeed belongs to $L_{\mathrm{op}}(\mathcal{F}_1, E_1)$, and satisfies
\[
\left\|\phi_{\mu}(x)\right\|_{L_{\mathrm{op}}(\mathcal{F}_1, E_1)} \leq \left\|\phi_1\left(\phi_0(x)(\mu)\right)\right\|_{L_{\mathrm{op}}(\mathcal{F}_1, E_1)}.
\]
Therefore, $(\mathcal{F}_1, \phi_{\mu})$ constitutes a valid configuration that makes $\mathcal{B}_{\mu}(\mathcal{X}_0, E_1)$ a GRKBS for every $\mu \in \mathcal{F}_0$.
\begin{definition}\label{def7}
Given two nested GRKBSs, $\mathcal{B}_0\left(\mathcal{X}_0, E_0\right)$ and $\mathcal{B}_1(E_0, E_1)$, with configurations $(\mathcal{F}_0, \phi_0)$ and $(\mathcal{F}_1, \phi_1)$, respectively, we say that the family of GRKBSs $\left\{\mathcal{B}_{\mu}(\mathcal{X}_0, E_1)\right\}_{\mu \in \mathcal{F}_0}$, constructed in the preceding paragraph, constitutes a \textbf{one-hidden-layer abstract neural network (AbsNN)}.

In this context, we shall refer to $\mathcal{X}_0$ as the input set and to $E_1$ as the output set.
\end{definition}

We can observe that Definition~\ref{def7} is, to some extent, incomplete, as it does not explicitly explain why the structure is referred to as a one-hidden-layer network. However, this will be clarified later. For now, we emphasize that the family of GRKBSs representing the abstract network is defined by a single parameter $\mu \in \mathcal{F}_0$. 

To illustrate this, let us examine how the spaces $\mathcal{B}_0\left(\mathcal{X}_0, E_0\right)$ and $\mathcal{B}_1(E_0, E_1)$ relate to the ABsNN they constitute. 

Let us consider the following: Given a GRKBS $\mathcal{B}(\mathcal{X}, E)$ with configuration $(\mathcal{F}, \phi)$ and a subset $\overline{\mathcal{X}} \subset \mathcal{X}$, we can construct a GRKBS of functions defined over $\overline{\mathcal{X}}$ using the configuration pair $(\mathcal{F}, \left.\phi\right|_{\overline{\mathcal{X}}})$. 

This new space, denoted by $\mathcal{B}\left(\overline{\mathcal{X}} \subset \mathcal{X}, E\right)$, is called the \emph{restriction} of $\mathcal{B}(\mathcal{X}, E)$ to $\overline{\mathcal{X}}$. 

In light of this, and considering the two nested GRKBSs discussed in the previous paragraphs, for each $\mu \in \mathcal{F}_0$, we define the subset $\mathcal{X}_{\mu} = \phi(\mathcal{X}_0)(\mu) \subset E_0$, and consider the restriction of $\mathcal{B}_1(E_0, E_1)$ to $\mathcal{X}_{\mu}$, denoted by $\mathcal{B}_1(\mathcal{X}_{\mu} \subset E_0, E_1)$.

We now state the following theorem.

\begin{theorem}\label{teo5}
For every $\mu \in \mathcal{F}_0$, the following Banach spaces are isometrically isomorphic:
\begin{align}\label{eqn5}
\frac{\mathcal{B}_1(E_0, E_1)}{\phi^{(1)}_{*}\left(\mathcal{N}_{\left.\phi_1\right|_{\mathcal{X}_{\mu}}} / \mathcal{N}_{\phi_1}\right)} 
\cong \mathcal{B}_1(\mathcal{X}_{\mu} \subset E_0, E_1) 
\cong \mathcal{B}_{\mu}(\mathcal{X}_0, E_1),
\end{align}
where $\phi^{(1)}_{*}$ denotes the characteristic isometry of $\mathcal{B}_1(E_0, E_1)$.
\end{theorem}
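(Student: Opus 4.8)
The plan is to express each of the three Banach spaces appearing in \eqref{eqn5} as a quotient of the single configuration space $\mathcal{F}_1$, and then identify the quotients. By construction the restriction $\mathcal{B}_1(\mathcal{X}_{\mu} \subset E_0, E_1)$ is the GRKBS with configuration $(\mathcal{F}_1, \phi_1|_{\mathcal{X}_{\mu}})$, so Proposition~\ref{prop2}, applied through the characteristic isometry, gives $\mathcal{B}_1(\mathcal{X}_{\mu} \subset E_0, E_1) \cong \mathcal{F}_1/\mathcal{N}_{\phi_1|_{\mathcal{X}_{\mu}}}$ with $\mathcal{N}_{\phi_1|_{\mathcal{X}_{\mu}}} = \bigcap_{y \in \mathcal{X}_{\mu}} \ker \phi_1(y)$, while likewise $\mathcal{B}_{\mu}(\mathcal{X}_0, E_1) \cong \mathcal{F}_1/\mathcal{N}_{\phi_{\mu}}$ and $\mathcal{B}_1(E_0, E_1) \cong \mathcal{F}_1/\mathcal{N}_{\phi_1}$ via $\phi^{(1)}_{*}$. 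The key elementary fact behind the right-hand isometry is that the first two kernels coincide: since $\mathcal{X}_{\mu} = \{\phi_0(x)(\mu) : x \in \mathcal{X}_0\}$ and $\phi_{\mu}(x) = \phi_1(\phi_0(x)(\mu))$ as operators on $\mathcal{F}_1$,
\[
\mathcal{N}_{\phi_{\mu}} = \bigcap_{x \in \mathcal{X}_0} \ker \phi_1\big(\phi_0(x)(\mu)\big) = \bigcap_{y \in \mathcal{X}_{\mu}} \ker \phi_1(y) = \mathcal{N}_{\phi_1|_{\mathcal{X}_{\mu}}}.
\]
Hence $\mathcal{F}_1/\mathcal{N}_{\phi_{\mu}}$ and $\mathcal{F}_1/\mathcal{N}_{\phi_1|_{\mathcal{X}_{\mu}}}$ are literally the same normed space, and composing the two characteristic isometries yields $\mathcal{B}_{\mu}(\mathcal{X}_0, E_1) \cong \mathcal{B}_1(\mathcal{X}_{\mu} \subset E_0, E_1)$.

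For the left-hand isometry I would transport the quotient through $\phi^{(1)}_{*} : \mathcal{F}_1/\mathcal{N}_{\phi_1} \to \mathcal{B}_1(E_0, E_1)$. Because $\mathcal{N}_{\phi_1} \subset \mathcal{N}_{\phi_1|_{\mathcal{X}_{\mu}}}$ and $\mathcal{N}_{\phi_1|_{\mathcal{X}_{\mu}}}$ is closed in $\mathcal{F}_1$, the space $\mathcal{N}_{\phi_1|_{\mathcal{X}_{\mu}}}/\mathcal{N}_{\phi_1}$ is a closed subspace of $\mathcal{F}_1/\mathcal{N}_{\phi_1}$, and since $\phi^{(1)}_{*}$ is an isometric isomorphism its image $\phi^{(1)}_{*}(\mathcal{N}_{\phi_1|_{\mathcal{X}_{\mu}}}/\mathcal{N}_{\phi_1})$ is a closed subspace of $\mathcal{B}_1(E_0, E_1)$; thus the left-hand side of \eqref{eqn5} is a well-defined Banach space isometrically isomorphic to $(\mathcal{F}_1/\mathcal{N}_{\phi_1})\big/(\mathcal{N}_{\phi_1|_{\mathcal{X}_{\mu}}}/\mathcal{N}_{\phi_1})$. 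It then remains to invoke the third isomorphism theorem for Banach spaces carrying their quotient norms: for closed subspaces $N \subset M$ of a Banach space $X$, the canonical algebraic isomorphism $(X/N)\big/(M/N) \to X/M$, $(x+N) + M/N \mapsto x + M$, is an isometry. Applying this with $X = \mathcal{F}_1$, $N = \mathcal{N}_{\phi_1}$ and $M = \mathcal{N}_{\phi_1|_{\mathcal{X}_{\mu}}}$, and combining with $\mathcal{F}_1/\mathcal{N}_{\phi_1|_{\mathcal{X}_{\mu}}} \cong \mathcal{B}_1(\mathcal{X}_{\mu} \subset E_0, E_1)$ from the first paragraph, closes the chain \eqref{eqn5}.

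The routine bookkeeping aside, I expect the main obstacle to be checking that every link in the chain is genuinely isometric rather than merely a bounded isomorphism. Concretely this reduces to an iterated-infimum computation, entirely analogous to the one carried out in the proof of Theorem~\ref{teo1} and using $M + N = M$, showing that the norm of $(\mathcal{F}_1/\mathcal{N}_{\phi_1})\big/(\mathcal{N}_{\phi_1|_{\mathcal{X}_{\mu}}}/\mathcal{N}_{\phi_1})$ agrees with that of $\mathcal{F}_1/\mathcal{N}_{\phi_1|_{\mathcal{X}_{\mu}}}$; and, for the right-hand isometry, to verifying that the map sending the element of $\mathcal{B}_{\mu}(\mathcal{X}_0, E_1)$ parametrized by $\nu \in \mathcal{F}_1$ to the element of $\mathcal{B}_1(\mathcal{X}_{\mu} \subset E_0, E_1)$ parametrized by the same $\nu$ is well defined and norm-preserving, which is exactly the content of the kernel equality $\mathcal{N}_{\phi_{\mu}} = \mathcal{N}_{\phi_1|_{\mathcal{X}_{\mu}}}$ already established.
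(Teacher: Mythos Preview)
Your proposal is correct and follows essentially the same route as the paper: the right-hand isometry is obtained from the kernel equality $\mathcal{N}_{\phi_{\mu}} = \mathcal{N}_{\phi_1|_{\mathcal{X}_{\mu}}}$, and the left-hand isometry is the third isomorphism theorem $(\mathcal{F}_1/\mathcal{N}_{\phi_1})\big/(\mathcal{N}_{\phi_1|_{\mathcal{X}_{\mu}}}/\mathcal{N}_{\phi_1}) \cong \mathcal{F}_1/\mathcal{N}_{\phi_1|_{\mathcal{X}_{\mu}}}$ transported through $\phi^{(1)}_{*}$. The paper merely spells out in full the well-definedness, bijectivity, and iterated-infimum isometry computation for the map you call the third isomorphism theorem, exactly as you anticipate in your final paragraph.
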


\begin{proof}
We first proceed to prove that $\mathcal{B}_1(\mathcal{X}_{\mu} \subset E_0, E_1) \cong \mathcal{B}_{\mu}(\mathcal{X}_0, E_1)$.

Let us show that $\mathcal{N}_{\phi_{\mu}} = \mathcal{N}_{\left.\phi_1\right|_{\mathcal{X}_{\mu}}}$. Indeed, suppose that $\nu \in \mathcal{N}_{\phi_{\mu}}$. Then it follows that
\[
\phi_{\mu}(x)(\nu) = \phi_1\left(\phi_0(x)(\mu)\right)(\nu) = 0, \quad \forall x \in \mathcal{X}_0.
\]
In particular, for every $e^{(\mu)} \in \mathcal{X}_{\mu}$ such that $e^{(\mu)} = \phi_0(x)(\mu)$ for some $x \in \mathcal{X}_0$, we have
\[
\phi_1\left(e^{(\mu)}\right)(\nu) = \phi_1(\phi_0(x)(\mu))(\nu) = 0,
\]
and therefore $\nu \in \mathcal{N}_{\left.\phi_1\right|_{\mathcal{X}_{\mu}}}$. This proves that $\mathcal{N}_{\phi_{\mu}} \subseteq \mathcal{N}_{\left.\phi_1\right|_{\mathcal{X}_{\mu}}}$.

Conversely, suppose that $\nu \in \mathcal{N}_{\left.\phi_1\right|_{\mathcal{X}_{\mu}}}$. Then for all $e^{(\mu)} \in \mathcal{X}_{\mu}$, we have $\phi_1(e^{(\mu)})(\nu) = 0$. In particular, for every $x \in \mathcal{X}_0$, and taking $e^{(\mu)} = \phi_0(x)(\mu)$, it follows that
\[
\phi_1\left(e^{(\mu)}\right)(\nu) = \phi_1(\phi_0(x)(\mu))(\nu) = \phi_{\mu}(x)(\nu) = 0.
\]
Since the above holds for all $x \in \mathcal{X}_0$, we conclude that $\nu \in \mathcal{N}_{\phi_{\mu}}$. Therefore, $\mathcal{N}_{\left.\phi_1\right|_{\mathcal{X}_{\mu}}} \subseteq \mathcal{N}_{\phi_{\mu}}$. If $\mathcal{N}_{\phi_{\mu}} = \mathcal{N}_{\left.\phi_1\right|_{\mathcal{X}_{\mu}}}$, then it follows that
\[
\mathcal{F}_1/\mathcal{N}_{\phi_{\mu}} = \mathcal{F}_1/\mathcal{N}_{\left.\phi_{1}\right|_{\mathcal{X}_{\mu}}} \Rightarrow 
\mathcal{B}_1(\mathcal{X}_{\mu} \subset E_0, E_1) \cong \mathcal{B}_{\mu}(\mathcal{X}_0, E_1).
\]
In the remaining part of the proof, we will establish the existence of an isometry between the spaces 
\[
\frac{\mathcal{B}_1(E_0, E_1)}{\phi^{(1)}_{*}\left(\mathcal{N}_{\left.\phi_1\right|_{\mathcal{X}_{\mu}}} / \mathcal{N}_{\phi_1}\right)} \quad \text{and} \quad \mathcal{B}_1(\mathcal{X}_{\mu} \subset E_0, E_1).
\]
To this end, observe that
\[
\mathcal{N}_{\phi_1} \subset \mathcal{N}_{\left.\phi_1\right|_{\mathcal{X}_{\mu}}} \subset \mathcal{F}_1,
\]
where the first two are closed subspaces of $\mathcal{F}_1$. Within this context, we may consider the quotient spaces
\[
\mathcal{F}_1/\mathcal{N}_{\phi_1},\;\; \mathcal{F}_1/\mathcal{N}_{\left.\phi_1\right|_{\mathcal{X}_{\mu}}},\;\; \left(\mathcal{N}_{\left.\phi_1\right|_{\mathcal{X}_{\mu}}}\right)/\mathcal{N}_{\phi_1},
\]
each of which is a Banach space endowed with the corresponding quotient norm. In particular, we have
\[
\left(\mathcal{N}_{\left.\phi_1\right|_{\mathcal{X}_{\mu}}}\right)/\mathcal{N}_{\phi_1} = \pi\left(\mathcal{N}_{\left.\phi_1\right|_{\mathcal{X}_{\mu}}}\right),
\]
where $\pi: \mathcal{F}_1 \rightarrow \mathcal{F}_1/\mathcal{N}_{\phi_1}$ denotes the canonical projection. Consequently, $\left(\mathcal{N}_{\left.\phi_1\right|_{\mathcal{X}_{\mu}}}\right)/\mathcal{N}_{\phi_1}$ is a closed subspace of $\mathcal{F}_1/\mathcal{N}_{\phi_1}$.

Taking all the above into account, we define the following map, which is clearly linear:
\begin{align*}
 \widetilde{\pi}: \frac{\mathcal{F}_1/\mathcal{N}_{\phi_1}}{\left(\mathcal{N}_{\left.\phi_1\right|_{\mathcal{X}_{\mu}}}\right)/\mathcal{N}_{\phi_1}} \rightarrow \mathcal{F}_1/\mathcal{N}_{\left.\phi_1\right|_{\mathcal{X}_{\mu}}},
\end{align*}
where, for every $\nu \in \mathcal{F}_1$, the correspondence is defined as
\[
\left(\nu + \mathcal{N}_{\phi_1}\right) + \left(\mathcal{N}_{\left.\phi_1\right|_{\mathcal{X}_{\mu}}}\right)/\mathcal{N}_{\phi_1} \mapsto \nu + \mathcal{N}_{\left.\phi_1\right|_{\mathcal{X}_{\mu}}}.
\]
We first verify that $\widetilde{\pi}$ is well-defined. Suppose $\nu_1, \nu_2 \in \mathcal{F}_1$ are such that
\[
\left(\nu_1 + \mathcal{N}_{\phi_1}\right) + \left(\mathcal{N}_{\left.\phi_1\right|_{\mathcal{X}_{\mu}}} / \mathcal{N}_{\phi_1}\right) = \left(\nu_2 + \mathcal{N}_{\phi_1}\right) + \left(\mathcal{N}_{\left.\phi_1\right|_{\mathcal{X}_{\mu}}} / \mathcal{N}_{\phi_1}\right),
\]
then
\[
(\nu_1 - \nu_2) + \mathcal{N}_{\phi_1} \in \mathcal{N}_{\left.\phi_1\right|_{\mathcal{X}_{\mu}}} / \mathcal{N}_{\phi_1} = \pi\left(\mathcal{N}_{\left.\phi_1\right|_{\mathcal{X}_{\mu}}}\right) \Rightarrow \nu_1 - \nu_2 \in \mathcal{N}_{\left.\phi_1\right|_{\mathcal{X}_{\mu}}} \Rightarrow \nu_1 + \mathcal{N}_{\left.\phi_1\right|_{\mathcal{X}_{\mu}}} = \nu_2 + \mathcal{N}_{\left.\phi_1\right|_{\mathcal{X}_{\mu}}}.
\]
This also shows that $\widetilde{\pi}$ is injective, since for any $\nu \in \mathcal{F}_1$,
\[
\left(\nu + \mathcal{N}_{\phi_1}\right) + \left(\mathcal{N}_{\left.\phi_1\right|_{\mathcal{X}_{\mu}}} / \mathcal{N}_{\phi_1}\right) = \left(0 + \mathcal{N}_{\phi_1}\right) + \left(\mathcal{N}_{\left.\phi_1\right|_{\mathcal{X}_{\mu}}} / \mathcal{N}_{\phi_1}\right) \Rightarrow \nu \in \mathcal{N}_{\left.\phi_1\right|_{\mathcal{X}_{\mu}}}.
\]

Furthermore, it is straightforward to show that $\widetilde{\pi}$ is surjective. Therefore, $\widetilde{\pi}$ is an isomorphism.

We now prove that $\widetilde{\pi}$ is an isometry. To that end, we compute:
\begin{align*}
&\left\|\left(\nu + \mathcal{N}_{\phi_1}\right) + \left(\mathcal{N}_{\left.\phi_1\right|_{\mathcal{X}_{\mu}}} / \mathcal{N}_{\phi_1}\right)\right\|_{\frac{\mathcal{F}_1/\mathcal{N}_{\phi_1}}{\left(\mathcal{N}_{\left.\phi_1\right|_{\mathcal{X}_{\mu}}}\right)/\mathcal{N}_{\phi_1}}} \\
&= \inf_{\widetilde{\nu} + \mathcal{N}_{\phi_1} \in \left(\mathcal{N}_{\left.\phi_1\right|_{\mathcal{X}_{\mu}}}\right)/\mathcal{N}_{\phi_1}} \left\|(\nu - \widetilde{\nu}) + \mathcal{N}_{\phi_1}\right\|_{\mathcal{F}_1/\mathcal{N}_{\phi_1}} \\
&= \inf_{\widetilde{\nu} \in \mathcal{N}_{\left.\phi_1\right|_{\mathcal{X}_{\mu}}}} \left\|(\nu - \widetilde{\nu}) + \mathcal{N}_{\phi_1}\right\|_{\mathcal{F}_1/\mathcal{N}_{\phi_1}} \\
&= \inf_{\substack{\widetilde{\nu} \in \mathcal{N}_{\left.\phi_1\right|_{\mathcal{X}_{\mu}}} \\ \bar{\nu} \in \mathcal{N}_{\phi_1}}} \left\| \nu - (\widetilde{\nu} + \bar{\nu}) \right\|_{\mathcal{F}_1} \\
&= \inf_{\widehat{\nu} \in \mathcal{N}_{\left.\phi_1\right|_{\mathcal{X}_{\mu}}}} \left\| \nu - \widehat{\nu} \right\|_{\mathcal{F}_1} = \left\| \nu + \mathcal{N}_{\left.\phi_1\right|_{\mathcal{X}_{\mu}}} \right\|_{\mathcal{F}_1/\mathcal{N}_{\left.\phi_1\right|_{\mathcal{X}_{\mu}}}}.
\end{align*}
This equality proves that $\widetilde{\pi}$ preserves norms, and therefore it is an isometry.

In conclusion, as established in Proposition~\ref{prop4}, there exist isometric isomorphisms
\begin{align*}
&\phi^{(1)}_{*}: \mathcal{F}_1/\mathcal{N}_{\phi_1} \rightarrow \mathcal{B}_1\left(E_0, E_1\right),\\
&\left(\left.\phi_1\right|_{\mathcal{X}_{\mu}}\right)_{*} : \mathcal{F}_1/\mathcal{N}_{\left.\phi_1\right|_{\mathcal{X}_{\mu}}} \rightarrow \mathcal{B}_1(\mathcal{X}_{\mu} \subset E_0, E_1),
\end{align*}
and through these, the isometry $\widetilde{\pi}$ induces an isometric isomorphism between
\[
\frac{\mathcal{B}_1(E_0, E_1)}{\phi^{(1)}_{*}\left(\mathcal{N}_{\left.\phi_1\right|_{\mathcal{X}_{\mu}}} / \mathcal{N}_{\phi_1}\right)} \quad \text{and} \quad \mathcal{B}_1(\mathcal{X}_{\mu} \subset E_0, E_1).
\]
This completes the proof of the theorem. 
\end{proof}

\begin{remark}
Observe that the isometric isomorphisms between the GRKBSs given in \eqref{eqn5} induce a mapping between the elements of the AbsNN, viewed as functions from the input set $\mathcal{X}_0$ to the output set $E_1$, and functions defined on $E_0$ with values in $E_1$, which carry information encoded by the functions in $\mathcal{B}_0(\mathcal{X}_0, E_0)$. 

In this context, it is natural to interpret the network as encoding the information transmitted from the input set $\mathcal{X}_0$ through the functions in 
\[
\frac{\mathcal{B}_1(E_0, E_1)}{\phi^{(1)}_{*}\left(\mathcal{N}_{\left.\phi_1\right|_{\mathcal{X}_{\mu}}} / \mathcal{N}_{\phi_1}\right)}.
\]
Taking this into account, we denote the chain of isometric isomorphisms in \eqref{eqn5} by
\begin{align}
\mathcal{HL}_{\mu} :  \mathcal{B}_{\mu}(\mathcal{X}_0, E_1) \rightarrow \mathcal{B}_1(\mathcal{X}_{\mu} \subset E_0, E_1) \rightarrow \frac{\mathcal{B}_1(E_0, E_1)}{\phi^{(1)}_{*}\left(\mathcal{N}_{\left.\phi_1\right|_{\mathcal{X}_{\mu}}} / \mathcal{N}_{\phi_1}\right)},
\end{align}
and we shall say that the family $\left\{\mathcal{HL}_{\mu}\right\}_{\mu \in \mathcal{F}_0}$ constitutes the unique hidden layer of this AbsNN.
\end{remark}

We aim to extend the definition of abstract neural network (AbsNN) that we have just established to the case where three or more GRKBSs are combined. As we shall see, this will allow us to construct AbsNNs with two or more hidden layers.

Suppose we are given three nested GRKBSs:
\begin{align}\label{eqn6}
\begin{array}{cc}
\mathcal{B}_0(\mathcal{X}_0, E_0), & \text{with configuration $(\mathcal{F}_0, \phi_0),$} \\
\mathcal{B}_1(E_0, E_1), & \text{with configuration $(\mathcal{F}_1, \phi_1),$}\\
\mathcal{B}_2(E_1, E_2), & \text{with configuration $(\mathcal{F}_2, \phi_2).$}
\end{array}
\end{align}
From these, we define the following biparametric family of GRKBSs:
\begin{align*}
\left\{\mathcal{B}_{(\mu_0, \mu_1)}\left(\mathcal{X}_0, E_2\right)\right\}_{(\mu_0, \mu_1) \in \mathcal{F}_0\times \mathcal{F}_1},\;\; \text{with configuration $(\mathcal{F}_2,\phi_{(\mu_0, \mu_1)}),$}
\end{align*}
where
\begin{align*}
 \phi_{(\mu_0, \mu_1)}(x) = \phi_2\left(\phi_1(\phi_0(x)(\mu_0))(\mu_1)\right) \in L_{\mathrm{op}}(\mathcal{F}_2, E_2).  
\end{align*}
We say that the family $\left\{\mathcal{B}_{(\mu_0, \mu_1)}\left(\mathcal{X}_0, E_2\right)\right\}_{(\mu_0, \mu_1) \in \mathcal{F}_0\times \mathcal{F}_1}$ constitutes a two-hidden-layer AbsNN. 
\begin{proposition}\label{prop5}
Let us consider three nested GRKBSs as those given in \eqref{eqn6}. For each pair $(\mu_0, \mu_1) \in \mathcal{F}_0 \times \mathcal{F}_1$, we define
\[
\mathcal{X}_{\mu_0} = \phi_0(\mathcal{X}_0)(\mu_0) \;\; \text{and} \;\; \mathcal{X}_{(\mu_0, \mu_1)} = \phi_1(\mathcal{X}_{\mu_0})(\mu_1) = \phi_1(\phi_0(\mathcal{X}_0)(\mu_0))(\mu_1).
\]
This leads to the following sequences of isometric isomorphisms:
\begin{align*}
\mathcal{HL}_{\mu_0} = \mathcal{B}_{\mu_0}(\mathcal{X}_0, E_1) \rightarrow \mathcal{B}_{1}(\mathcal{X}_{\mu_0} \subset E_0, E_1) \rightarrow \frac{\mathcal{B}_1(E_0, E_1)}{\phi^{(1)}_{*}\left(\mathcal{N}_{\left.\phi_1\right|_{\mathcal{X}_{\mu_0}}}/\mathcal{N}_{\phi_1}\right)},
\end{align*}
and
\begin{align*}
\mathcal{HL}_{(\mu_0, \mu_1)} = \mathcal{B}_{(\mu_0, \mu_1)}(\mathcal{X}_0, E_2) \rightarrow \mathcal{B}_{2}(\mathcal{X}_{(\mu_0, \mu_1)} \subset E_1, E_2) \rightarrow \frac{\mathcal{B}_2(E_1, E_2)}{\phi^{(2)}_{*}\left(\mathcal{N}_{\left.\phi_2\right|_{\mathcal{X}_{(\mu_0, \mu_1)}}}/\mathcal{N}_{\phi_2}\right)},
\end{align*}
where $\phi^{(1)}_{*}$ and $\phi^{(2)}_{*}$ denote the characteristic isometries of $\mathcal{B}_1(E_0, E_1)$ and $\mathcal{B}_2(E_1, E_2)$, respectively.
\end{proposition}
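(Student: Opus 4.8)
The plan is to derive both chains in the statement as direct instances of Theorem~\ref{teo5}, exploiting the recursive nature of the AbsNN construction. For the first chain this is immediate: the pair $\mathcal{B}_0(\mathcal{X}_0, E_0)$ and $\mathcal{B}_1(E_0, E_1)$ is nested with configurations $(\mathcal{F}_0, \phi_0)$ and $(\mathcal{F}_1, \phi_1)$, and by construction $\mathcal{B}_{\mu_0}(\mathcal{X}_0, E_1)$ is precisely the member of the associated one-hidden-layer AbsNN indexed by $\mu_0 \in \mathcal{F}_0$. Applying Theorem~\ref{teo5} with parameter $\mu = \mu_0$ and $\mathcal{X}_{\mu_0} = \phi_0(\mathcal{X}_0)(\mu_0)$ yields exactly the isometries displayed as $\mathcal{HL}_{\mu_0}$.

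For the second chain, the crucial observation is that $\mathcal{B}_{(\mu_0,\mu_1)}(\mathcal{X}_0, E_2)$ is itself a one-hidden-layer AbsNN, built from the nested pair consisting of $\mathcal{B}_{\mu_0}(\mathcal{X}_0, E_1)$, equipped with the configuration $(\mathcal{F}_1, \phi_{\mu_0})$, and $\mathcal{B}_2(E_1, E_2)$, equipped with $(\mathcal{F}_2, \phi_2)$, with index $\mu_1 \in \mathcal{F}_1$. To justify this I would unfold the definitions: the configuration map of this one-hidden-layer AbsNN at $\mu_1$ sends $x$ to $\phi_2\bigl(\phi_{\mu_0}(x)(\mu_1)\bigr)$, and since $\phi_{\mu_0}(x)(\mu_1) = \phi_1\bigl(\phi_0(x)(\mu_0)\bigr)(\mu_1)$, this equals $\phi_2\bigl(\phi_1(\phi_0(x)(\mu_0))(\mu_1)\bigr) = \phi_{(\mu_0,\mu_1)}(x)$. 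Hence the two configuration pairs coincide, and therefore so do the resulting GRKBSs and their kernels.

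It then remains to apply Theorem~\ref{teo5} to this nested pair with parameter $\mu = \mu_1 \in \mathcal{F}_1$. The restriction set produced by the theorem is $\phi_{\mu_0}(\mathcal{X}_0)(\mu_1) = \phi_1\bigl(\phi_0(\mathcal{X}_0)(\mu_0)\bigr)(\mu_1) = \phi_1(\mathcal{X}_{\mu_0})(\mu_1) = \mathcal{X}_{(\mu_0,\mu_1)}$, while the characteristic isometry of the outer space $\mathcal{B}_2(E_1, E_2)$ is $\phi^{(2)}_{*}$. Substituting these into the conclusion of Theorem~\ref{teo5} gives
\[
\mathcal{B}_{(\mu_0,\mu_1)}(\mathcal{X}_0, E_2) \cong \mathcal{B}_2(\mathcal{X}_{(\mu_0,\mu_1)} \subset E_1, E_2) \cong \frac{\mathcal{B}_2(E_1, E_2)}{\phi^{(2)}_{*}\bigl(\mathcal{N}_{\left.\phi_2\right|_{\mathcal{X}_{(\mu_0,\mu_1)}}}/\mathcal{N}_{\phi_2}\bigr)},
\]
which is exactly the chain $\mathcal{HL}_{(\mu_0,\mu_1)}$.

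The only point demanding real care — and what I anticipate as the main, albeit modest, obstacle — is the bookkeeping in the second step: one must verify that plugging the already-parametrized configuration $(\mathcal{F}_1, \phi_{\mu_0})$ of $\mathcal{B}_{\mu_0}(\mathcal{X}_0, E_1)$ into the one-hidden-layer recipe reproduces exactly the map $\phi_{(\mu_0,\mu_1)}$, the associated kernel $\mathcal{N}_{\phi_{(\mu_0,\mu_1)}} = \mathcal{N}_{\left.\phi_2\right|_{\mathcal{X}_{(\mu_0,\mu_1)}}}$, and the restriction set $\mathcal{X}_{(\mu_0,\mu_1)}$, so that Theorem~\ref{teo5} is legitimately applicable. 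Once this compatibility is checked, both chains follow by invoking Theorem~\ref{teo5} twice, with no new norm estimates required.
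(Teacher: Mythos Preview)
Your proposal is correct and matches the paper's approach: the paper's proof is the one-line remark that the argument is identical to that of Theorem~\ref{teo5}, and you have simply made explicit how Theorem~\ref{teo5} is invoked twice (once for each chain), together with the routine bookkeeping that identifies $\phi_{(\mu_0,\mu_1)}$ with the composite configuration map and $\mathcal{X}_{(\mu_0,\mu_1)}$ with the corresponding restriction set.
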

\begin{proof}
The proof of this proposition is identical to that of Theorem~\ref{teo5}.
\end{proof}

With reference to the composition structure associated with the three GRKBSs given in \eqref{eqn6}, we say that the family of GRKBSs $\left\{\mathcal{B}_{(\mu_0, \mu_1)}(\mathcal{X}_0, E_2)\right\}_{(\mu_0, \mu_1) \in \mathcal{F}_0 \times \mathcal{F}_1}$ defines an AbsNN with two hidden layers:
\[
\left\{\mathcal{HL}_{\mu_0}\right\}_{\mu_0 \in \mathcal{F}_0} \;\; \text{and} \;\; \left\{\mathcal{HL}_{(\mu_0, \mu_1)}\right\}_{(\mu_0, \mu_1) \in \mathcal{F}_0 \times \mathcal{F}_1}.
\]

Finally, we aim to generalize the concept of an AbsNN with $n$ hidden layers, obtained through the composition process described in the previous paragraphs from $n + 1$ nested GRKBSs.

Let us consider a sequence of $n+1$ nested GRKBSs, $\mathcal{B}_i(\mathcal{X}_i, E_i)$, with configurations $(\mathcal{F}_i, \phi_i)$ for $i = 0,\ldots,n$, where
\[
\mathcal{X}_i = E_{i-1}, \quad i = 1,\ldots,n,
\]
and each $E_i$ is a Banach space. From these, we define the following $n$-parametric family of GRKBSs:
\[
\left\{\mathcal{B}_{(\mu_1, \mu_2,\ldots,\mu_{n-1})}(\mathcal{X}_0, E_n)\right\}_{(\mu_1, \mu_2,\ldots,\mu_{n-1}) \in \mathcal{F}_{0}\times\mathcal{F}_{1}\times\ldots\times\mathcal{F}_{n-1}},
\]
where each GRKBS in the family, indexed by $(\mu_1, \mu_2,\ldots,\mu_{n-1}) \in \mathcal{F}_{0}\times\mathcal{F}_{1}\times\ldots\times\mathcal{F}_{n-1}$, is defined by the configuration $\left(\mathcal{F}_n, \phi_{(\mu_1, \mu_2,\ldots,\mu_{n-1})}\right)$, where
\[
\phi_{(\mu_1, \mu_2,\ldots,\mu_{n-1})}: \mathcal{X}_0 \rightarrow L_{\mathrm{op}}(\mathcal{F}_n, E_n)
\]
is the map we now proceed to define. 

For every $\nu \in \mathcal{F}_n$ and $x \in \mathcal{X}_0$, we define $\phi_{(\mu_0, \mu_1, \ldots,\mu_{n-1})}(x)(\nu)$ in a sequential manner. Given $(\mu_0, \mu_1, \ldots,\mu_{n-1}) \in \mathcal{F}_0 \times \mathcal{F}_1 \times \cdots \times \mathcal{F}_{n-1}$, we define:
\begin{align}\label{eqn7}
\left\{
\begin{array}{c}
e^{(\mu_0)} = \phi_0(x)(\mu_0) \in E_0,\\
e^{(\mu_0, \mu_1)} = \phi_1(e^{(\mu_0)})(\mu_1) = \phi_1(\phi_0(x)(\mu_0))(\mu_1) \in E_1,\\
\vdots\\
e^{(\mu_0, \mu_1, \ldots, \mu_i)} = \phi_i\left(e^{(\mu_0, \mu_1, \ldots, \mu_{i-1})}\right)(\mu_i) \in E_i,
\end{array}
\right. \quad \text{for all } i = 1,\ldots, n-1.
\end{align}
Thus, in \eqref{eqn7}, we have defined an iterated sequence of $n$ elements such that
\[
\phi_{(\mu_0, \mu_1, \ldots,\mu_{n-1})}(x)(\nu) = \phi_n\left(e^{(\mu_0, \mu_1, \ldots, \mu_{n-1})}\right)(\nu).
\]
Once the $n$-parametric family of GRKBSs representing the AbsNN has been defined, we now describe the $n$ hidden layers that compose it. The first layer is determined by the following triple of isometric isomorphisms:
\begin{align*}
\mathcal{HL}_{\mu_0}: \mathcal{B}_{\mu_0}(\mathcal{X}_0, E_1) \rightarrow \mathcal{B}_{1}(\mathcal{X}_{\mu_0} \subset E_0, E_1) \rightarrow \frac{\mathcal{B}_{1}(E_0, E_1)}{\phi^{(1)}_{*}\left(\mathcal{N}_{\left.\phi_1\right|_{\mathcal{X}_{\mu_0}}}/{\mathcal{N}_{\phi_1}}\right)},
\end{align*}
where $\mathcal{X}_{\mu_0} = \phi_0\left(\mathcal{X}_0\right)(\mu_0) \subset E_0$, for $\mu_0 \in \mathcal{F}_0$. On the other hand, setting
\[
\mathcal{X}_{(\mu_0, \mu_1)} = \phi_1\left(\mathcal{X}_{\mu_0}\right)(\mu_1),
\]
the second layer consists of the following isometric isomorphisms:
\begin{align*}
\mathcal{HL}_{(\mu_0, \mu_1)}: \mathcal{B}_{(\mu_0, \mu_1)}(\mathcal{X}_0, E_2) \rightarrow \mathcal{B}_{2}(\mathcal{X}_{(\mu_0, \mu_1)} \subset E_1, E_2) \rightarrow \frac{\mathcal{B}_{2}(E_1, E_2)}{\phi^{(2)}_{*}\left(\mathcal{N}_{\left.\phi_2\right|_{\mathcal{X}_{(\mu_0, \mu_1)}}}/{\mathcal{N}_{\phi_2}}\right)}.
\end{align*}
By iterating this process, we obtain the $i$-th hidden layer as:
\begin{align*}
\mathcal{HL}_{(\mu_0,\ldots,\mu_{i})}: \mathcal{B}_{(\mu_0,\ldots,\mu_{i})}(\mathcal{X}_0, E_i) \rightarrow \mathcal{B}_i(\mathcal{X}_{(\mu_0,\ldots,\mu_{i-1})} \subset E_{i-1}, E_i) \rightarrow \frac{\mathcal{B}_{i}(E_{i-1}, E_i)}{\phi^{(i)}_{*}\left(\mathcal{N}_{\left.\phi_i\right|_{\mathcal{X}_{(\mu_0,\ldots,\mu_{i-1})}}}/{\mathcal{N}_{\phi_i}}\right)},
\end{align*}
for $i = 3,\ldots,n$, thereby obtaining the remaining $n-2$ hidden layers.  
\section{Existence of Sparse Minimizers for the Abstract Training Problem in GRKBSs and AbsNNs}\label{sec:ATP}
\subsection{Sparse Minimizers for the ATP in GRKBSs}
In this section, our goal is to define and analyze what we refer to as the \emph{abstract training problem} (ATP) in the setting of a GRKBS. We focus on examining the existence of solutions that can be expressed as finite linear combinations of specific types of functions. We begin by studying the ATP defined on a GRKBS, and subsequently extend our analysis to the ATP in abstract neural networks (AbsNNs).

In the context of GRKBSs, this problem can be formulated as a variational problem in a Banach space, which is typically ill-posed. Therefore, studying the existence of solutions becomes essential. Following the approach proposed in \cite{Bredies2020}, we interpret the abstract training problem as a variational problem governed by a regularizer satisfying the conditions of Theorem 3.3 in \cite{Bredies2020}. These conditions ensure the existence of \emph{sparse solutions}, that is, solutions expressible as finite linear combinations of elements from the hypothesis space.

We provide the following definition.

\begin{definition}\label{def5}
Let $\mathcal{B}$ be a GRKBS of functions $f: \mathcal{X} \rightarrow \mathit{E}$, let $\mathcal{L}: \mathit{E} \times \mathit{E} \rightarrow \mathbb{R}$ be a loss function, and let $\left\{x_i\right\}^{N}_{i = 1} \subset \mathcal{X}$ and $\left\{y_i\right\}^{N}_{i = 1} \subset \mathit{E}$ denote the training data. The \emph{abstract training problem} (ATP) associated with $\mathcal{B}, \mathcal{L}$, and the training set $\left\{(x_i, y_i)\right\}^{N}_{i=1}$ is defined as the following variational problem:
\begin{align}\label{atpB}
\inf_{f \in \mathcal{B}}\left(\frac{1}{N}\sum_{i = 1}^N \mathcal{L}(y_i, f(x_i)) + \|f\|_{\mathcal{B}}\right).
\end{align}
\end{definition}
Given that $\mathcal{B}$ is a GRKBS, we can formulate the problem \eqref{atpB} in terms of the characteristic space $\mathcal{F}$ associated with $\mathcal{B}$. Indeed, the following proposition holds.
\begin{proposition}
Under the conditions of Definition \ref{def5}, we have that
\begin{align*}
\inf_{f \in \mathcal{B}}\left(\frac{1}{N}\sum^N_{i = 1}\mathcal{L}(y_i, f(x_i)) + \|f\|_{\mathcal{B}}\right) = \inf_{\mu \in \mathcal{F}}\left(\frac{1}{N}\sum^N_{i = 1}\mathcal{L}(y_i, f_{\mu}(x_i)) + \|\mu\|_{\mathcal{F}}\right).
\end{align*}
Moreover, if $\mu^{*}$ is a solution to the problem
\begin{align}\label{atpmu}
 \inf_{\mu \in \mathcal{F}}\left(\frac{1}{N}\sum^N_{i = 1}\mathcal{L}(y_i, f_{\mu}(x_i)) + \|\mu\|_{\mathcal{F}}\right),
\end{align}
then $f_{\mu^{*}}$ is a solution to the problem \eqref{atpB}.
\end{proposition}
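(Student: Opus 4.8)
The plan is to exploit the representation of $\mathcal{B}$ provided by Proposition~\ref{prop2}: every $f \in \mathcal{B}$ equals $f_{\mu}$ for some $\mu \in \mathcal{F}$, the evaluation $f_{\mu}(x) = \phi(x)(\mu)$ depends only on $\mu$, and $\|f\|_{\mathcal{B}} = \inf\{\|\mu\|_{\mathcal{F}} : f = f_{\mu}\}$. Abbreviating $J_{\mathcal{B}}(f) = \frac{1}{N}\sum_{i=1}^{N}\mathcal{L}(y_i, f(x_i)) + \|f\|_{\mathcal{B}}$ and $J_{\mathcal{F}}(\mu) = \frac{1}{N}\sum_{i=1}^{N}\mathcal{L}(y_i, f_{\mu}(x_i)) + \|\mu\|_{\mathcal{F}}$, I would establish the equality of the two infima by proving the two inequalities separately, and then deduce the statement about minimizers.

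For $\inf_{f \in \mathcal{B}} J_{\mathcal{B}}(f) \le \inf_{\mu \in \mathcal{F}} J_{\mathcal{F}}(\mu)$: given any $\mu \in \mathcal{F}$, the function $f_{\mu}$ lies in $\mathcal{B}$, its data-fitting term coincides with that of $J_{\mathcal{F}}(\mu)$ because $f_{\mu}(x_i) = \phi(x_i)(\mu)$, and $\|f_{\mu}\|_{\mathcal{B}} \le \|\mu\|_{\mathcal{F}}$ directly from the infimum defining the norm of $\mathcal{B}$; hence $J_{\mathcal{B}}(f_{\mu}) \le J_{\mathcal{F}}(\mu)$, and taking the infimum over $\mu$ gives the inequality. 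For the reverse inequality I would argue with an arbitrary $\varepsilon > 0$, since the infimum defining $\|f\|_{\mathcal{B}}$ need not be attained (equivalently, $\mathcal{N}_{\phi}$ need not be proximinal in $\mathcal{F}$). Fixing $f \in \mathcal{B}$, pick $\mu \in \mathcal{F}$ with $f = f_{\mu}$ and $\|\mu\|_{\mathcal{F}} < \|f\|_{\mathcal{B}} + \varepsilon$; since $f(x_i) = f_{\mu}(x_i)$ the cost terms agree, so $\inf_{\mu} J_{\mathcal{F}}(\mu) \le J_{\mathcal{F}}(\mu) < J_{\mathcal{B}}(f) + \varepsilon$. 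Letting $\varepsilon \to 0$ and then taking the infimum over $f \in \mathcal{B}$ yields $\inf_{\mu} J_{\mathcal{F}}(\mu) \le \inf_{f} J_{\mathcal{B}}(f)$, and combining the two inequalities proves the claimed equality.

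For the last assertion, suppose $\mu^{*}$ solves \eqref{atpmu}, so $J_{\mathcal{F}}(\mu^{*}) = \inf_{\mu} J_{\mathcal{F}}(\mu)$. Applying the first inequality to $\mu = \mu^{*}$ gives $J_{\mathcal{B}}(f_{\mu^{*}}) \le J_{\mathcal{F}}(\mu^{*}) = \inf_{\mu} J_{\mathcal{F}}(\mu) = \inf_{f \in \mathcal{B}} J_{\mathcal{B}}(f)$, where the final equality is the part just proved; since $f_{\mu^{*}} \in \mathcal{B}$ the reverse bound $J_{\mathcal{B}}(f_{\mu^{*}}) \ge \inf_{f} J_{\mathcal{B}}(f)$ holds trivially, so $f_{\mu^{*}}$ minimizes \eqref{atpB}.

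I do not expect any real obstacle here; the only subtlety is to avoid assuming that the norm of $\mathcal{B}$ is realized by some representative $\mu$, which is precisely why the reverse inequality is handled through an $\varepsilon$-perturbation rather than by a direct optimal choice. Everything else reduces to the observation that the data-fitting functional depends on $f$ only through the finitely many values $f(x_1),\ldots,f(x_N)$, hence is invariant under replacing one representative $\mu$ of $f$ by another.
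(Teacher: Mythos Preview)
Your argument is correct and follows essentially the same route as the paper: both proofs rest on the fact that the data-fitting term depends only on the coset $\mu+\mathcal{N}_{\phi}$ while $\|f\|_{\mathcal{B}}$ is the corresponding quotient norm, so the two infima agree. The paper packages the first part as a single chain of equalities via the quotient space rather than two inequalities with an $\varepsilon$-choice, and for the minimizer assertion it argues a bit more indirectly (deriving $\|\mu^{*}\|_{\mathcal{F}}\le\|f_{\mu^{*}}\|_{\mathcal{B}}$ along the way), whereas your deduction from $J_{\mathcal{B}}(f_{\mu^{*}})\le J_{\mathcal{F}}(\mu^{*})$ together with the equality of infima is slightly more streamlined; these are purely presentational differences.
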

\begin{proof}
The proof of this proposition follows essentially the same strategy as the proof of Proposition 3.7 in \cite{Bertolucci2023}. However, in our case, we formulate and prove the result in a general setting, whereas \cite{Bertolucci2023} considers only a particular case of RKBS.

The key idea is to use the isometric isomorphism between $\mathcal{F}/\mathcal{N}$ and $\mathcal{B}$.

We begin with the following chain of equalities:
\begin{align*}
\inf_{f \in \mathcal{B}}\left(\frac{1}{N}\sum_{i = 1}^N \mathcal{L}(y_i, f(x_i)) + \|f\|_{\mathcal{B}}\right) 
&= \inf_{\mu \in \mathcal{F}}\left(\frac{1}{N}\sum_{i = 1}^N \mathcal{L}(y_i, f_{\mu}(x_i)) + \|\mu + \mathcal{N}\|_{\mathcal{F}/\mathcal{N}}\right) \\
&= \inf_{\mu \in \mathcal{F}}\left(\frac{1}{N}\sum_{i = 1}^N \mathcal{L}(y_i, f_{\mu}(x_i)) + \inf_{\nu \in \mu + \mathcal{N}} \|\nu\|_{\mathcal{F}}\right) \\
&= \inf_{\mu \in \mathcal{F}, \, \nu \in \mu + \mathcal{N}} \left(\frac{1}{N}\sum_{i = 1}^N \mathcal{L}(y_i, f_{\mu}(x_i)) + \|\nu\|_{\mathcal{F}}\right) \\
&= \inf_{\mu \in \mathcal{F}} \left(\frac{1}{N}\sum_{i = 1}^N \mathcal{L}(y_i, f_{\mu}(x_i)) + \|\mu\|_{\mathcal{F}}\right).
\end{align*}

This proves the desired identity between the infima. Now, we show that $f_{\mu^*}$ solves problem \eqref{atpB} whenever $\mu^*$ solves problem \eqref{atpmu}.

By definition of minimizer, for all $\nu \in \mathcal{F}$,
\begin{align*}
\frac{1}{N}\sum_{i = 1}^N \mathcal{L}(y_i, f_{\mu^*}(x_i)) + \|\mu^*\|_{\mathcal{F}} 
\leq \frac{1}{N}\sum_{i = 1}^N \mathcal{L}(y_i, f_{\nu}(x_i)) + \|\nu\|_{\mathcal{F}}.
\end{align*}
Now, let us fix an arbitrary $\mu \in \mathcal{F}$ and choose $\nu \in \mathcal{F}$ such that $\nu \in \mu + \mathcal{N}_{\phi}$ and $f_{\nu} = f_{\mu}$. Since the previous inequality still holds for such $\nu$, and hence also for $\inf_{\nu \in \mu + \mathcal{N}_{\phi}} \|\nu\|_{\mathcal{F}}$, we obtain
\begin{align*}
\frac{1}{N}\sum_{i = 1}^N \mathcal{L}(y_i, f_{\mu^*}(x_i)) + \|\mu^*\|_{\mathcal{F}} 
&\leq \frac{1}{N}\sum_{i = 1}^N \mathcal{L}(y_i, f_{\mu}(x_i)) + \inf_{\nu \in \mu + \mathcal{N}_{\phi}}\|\nu\|_{\mathcal{F}}\\
&= \frac{1}{N}\sum_{i = 1}^N \mathcal{L}(y_i, f_{\mu}(x_i)) + \|f_{\mu}\|_{\mathcal{B}}.
\end{align*}
This inequality holds for all $\mu \in \mathcal{F}$, and thus for all $f_{\mu} \in \mathcal{B}$. In particular, taking $\mu = \mu^*$ yields
\[
\|\mu^*\|_{\mathcal{F}} \leq \|f_{\mu^*}\|_{\mathcal{B}},
\]
and therefore
\[
\frac{1}{N}\sum_{i = 1}^N \mathcal{L}(y_i, f_{\mu^*}(x_i)) + \|f_{\mu^*}\|_{\mathcal{B}} 
\leq \frac{1}{N}\sum_{i = 1}^N \mathcal{L}(y_i, f_{\mu}(x_i)) + \|f_{\mu}\|_{\mathcal{B}}, \quad \forall \mu \in \mathcal{F}.
\]
This proves that $f_{\mu^*}$ is a solution to problem \eqref{atpB}.
\end{proof}

In what follows, we focus on demonstrating the existence of at least one sparse solution to the problem \eqref{atpmu}. To do this, we will employ Theorem 3.3 from \cite{Bredies2020}, which establishes the conditions that must be satisfied by the problem \eqref{atpmu} for it to have a sparse solution. First, let us enumerate these conditions in our particular case.

We assume that $\mathcal{B}(\mathcal{X}, \mathit{E})$ is a GRKBS with configuration $\left(\mathcal{F}, \phi \right)$ satisfying the following conditions. For the characteristic space $\mathcal{F}$ endowed with the norm $\|\cdot\|_{\mathcal{F}},$ there exists a topology $\tau$ such that the topological space $(\mathcal{F}, \tau)$ is locally convex, and the sets 
\[
S^{-}\left(\|\cdot\|_{\mathcal{F}}, \alpha\right) := \left\{\mu \in \mathcal{F}: \|\mu\|_{\mathcal{F}} \leq \alpha\right\},
\]
are $\tau$-compact for all $\alpha > 0$. We say that $\|\cdot\|_{\mathcal{F}}$ is coercive with respect to the topology $\tau$ if it satisfies this latter condition. In addition, we assume that $\|\cdot\|_{\mathcal{F}}$ is lower semi-continuous with respect to the topology $\tau$.

A relatively natural way to obtain a space with these characteristics is to consider that $\mathcal{F}$ is isometrically $\mathcal{K}'$, the dual of a certain Banach space $\mathcal{K}$ with norm $\|\cdot\|_{\mathcal{K}}$ such that $$\|\mu\|_{\mathcal{F}} = \sup \left\{\prescript{}{\mathcal{K}'}{\left\langle \mu, \psi \right\rangle}_{\mathcal{K}}: \psi \in \mathcal{K}, \|\psi\|_{\mathcal{K}} \leq 1\right\},$$ for all $\mu \in \mathcal{F}.$ Then, we take $\tau = \sigma(\mathcal{K}', \mathcal{K})$, the weak* topology on $\mathcal{K}'$. Note that $\tau$ is the weak topology on $\mathcal{F}$. With this in mind, we have that $(\mathcal{F}, \tau)$ is locally convex. Indeed, let $\left\{\psi_1, \ldots, \psi_k\right\}$ be a finite subset of $\mathcal{K}$ and $\epsilon > 0$
\begin{align*}
V(\epsilon; \psi_1, \ldots, \psi_k) = \left\{\mu \in \mathcal{F}: \left|\prescript{}{\mathcal{K}'}{\left\langle \mu, \psi_i \right\rangle}_{\mathcal{K}}\right| < \epsilon, \forall i \in I \right\}.
\end{align*}
Then, $V$ is a convex neighborhood of $0 \in \mathcal{F}$. Moreover, by varying $\epsilon, k$, and the $x_i$'s in $\mathcal{K}$, we obtain a convex local base of $0 \in \mathcal{F}$. See Proposition 3.12. of \cite{Brezis2011}. On the other hand, the Banach–Alaoglu–Bourbaki Theorem states that the sets $S^{-}\left(\|\cdot\|_{\mathcal{F}}, \alpha\right)$ are $\tau$-compact for all $\alpha > 0$. In particular, these sets are also $\tau$-closed, since they are convex and closed in the strong topology associated with $\|\cdot\|_{\mathcal{F}},$ see Theorem 3.7 of \cite{Brezis2011}, which implies that their complements, namely
\begin{align*}
S^{-}\left(\|\cdot\|_{\mathcal{F}}, \alpha\right)^{c} = \left\{\mu \in \mathcal{F}: \|\mu\|_{\mathcal{F}} > \alpha\right\},
\end{align*}
are open in $\tau$. This latter fact means that the function $\|\cdot\|_{\mathcal{F}}$ is lower semi-continuous with respect to the topology $\tau$.

Regarding $\mathit{E}$, we will assume that it is a finite-dimensional Hilbert space of dimension $m$, so that $\mathit{E} \times \mathit{E} \times \cdots \times \mathit{E}$ ($N$ times), where $N$ is the number of samples in the training set, is a Hilbert space of dimension $mN$. In $\mathit{E} \times \mathit{E} \times \cdots \times \mathit{E}$, we consider the inner product $\left\langle\cdot, \cdot \right\rangle_{\mathit{E} \times \mathit{E} \times \cdots \times \mathit{E}} = \left\langle\cdot, \cdot \right\rangle_{\mathit{E}} + \cdots + \left\langle\cdot, \cdot \right\rangle_{\mathit{E}}$ ($N$ times).

Now, we define the operator $\mathcal{A}: \mathcal{F} \rightarrow \mathit{E} \times \mathit{E} \times \cdots \times \mathit{E}$ given by \[\mu \mapsto \left(\phi(x_1)(\mu), \cdots, \phi(x_N)(\mu)\right),\]
which is a linear and continuous operator such that $\text{range} \mathcal{A}$ is closed, and therefore, it is a Hilbert subspace of $\mathit{E} \times \mathit{E} \times \cdots \times \mathit{E}$ of dimension $\leq mN$. We denote by $\mathcal{H} = \text{range} \mathcal{A}$. 

Finally, we assume that the loss function $\mathcal{L}: \mathit{E}\times \mathit{E} \rightarrow \mathbb{R}$ is convex, proper, and coercive with respect to the second variable. In this sense, if we define $F: \mathcal{H} \rightarrow \mathbb{R}$ by
\[
F(w) = \frac{1}{N}\sum^{N}_{i = 1}\mathcal{L}(y_i, w_i)
\]
for all $w = (w_1,\dots,w_N) \in \mathcal{H}$, then this function is convex, proper, and coercive, and hence lower semi-continuous. Moreover,
\[
F(\mathcal{A}\mu) = \frac{1}{N}\sum^{N}_{i = 1}\mathcal{L}(y_i, \phi(x_i)(\mu)), \quad \forall \mu \in \mathcal{F}.
\]

Once we have imposed the conditions explained in the preceding paragraphs, we are in a position to state a result regarding the existence of sparse solutions to the problem \eqref{atpmu}, which we will hereafter refer to as the abstract training problem associated with $\mathcal{F}$. We then have the following theorem.
\begin{theorem}\label{TeoSpaSol}
Let $\mathcal{B}(\mathcal{X}, \mathit{E})$ be a GRKBS with configuration $\left(\mathcal{F}, \phi \right)$ satisfying the following conditions:
\begin{description}
\item[H0] There exists a topology $\tau$ in $\mathcal{F}$ that makes $(\mathcal{F}, \tau)$ a locally convex topological space and the norm $\|\cdot\|_{\mathcal{F}}$ coercive,
\item[H1] $\mathit{E}$ is a finite-dimensional Hilbert space of dimension $m,$
\item[H2] $\mathcal{L}: \mathit{E}\times \mathit{E} \rightarrow \mathbb{R}$ is convex, proper, and coercive with respect to the second variable,
\end{description}
then the abstract training problem associated with $\mathcal{F}$ \eqref{atpB} has a sparse solution in the form:
\begin{align}\label{spasol}
\mu^{*} = \sum^p_{i = 1}\gamma_i \mu_i,
\end{align}
where $p \leq mN,$ $\gamma_i > 0$ such that $\|\mu^{*}\|_{\mathcal{F}} = \sum^p_{i = 1}\gamma_i$, and $\mu_i \in \text{Ext}\left(B_{\mathcal{F}}\right)$, where $B_{\mathcal{F}}$ is the closed unit ball in $\mathcal{F}$, and $\text{Ext}\left(B_{\mathcal{F}}\right)$ are the extremal points of this unit ball. 
\end{theorem}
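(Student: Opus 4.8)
The plan is to recast \eqref{atpmu} into the template of Theorem~3.3 of \cite{Bredies2020} and invoke that result. Setting $\mathcal{H} = \text{range}\,\mathcal{A} \subseteq \mathit{E}^N$ — a subspace of a finite-dimensional space, hence finite-dimensional with $\dim\mathcal{H} \le mN$ and automatically closed — the abstract training problem associated with $\mathcal{F}$ becomes
\[
\inf_{\mu \in \mathcal{F}}\Big(F(\mathcal{A}\mu) + \|\mu\|_{\mathcal{F}}\Big), \qquad F(w) = \frac{1}{N}\sum_{i=1}^{N}\mathcal{L}(y_i, w_i),
\]
i.e.\ a convex data term composed with a linear operator onto finite-dimensional data, plus the convex regularizer $G = \|\cdot\|_{\mathcal{F}}$, whose associated ``unit set'' is $C = B_{\mathcal{F}}$. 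This is exactly the setting in which \cite{Bredies2020} guarantees a minimizer supported on finitely many extreme points of $C$, so the whole proof reduces to checking the hypotheses of that theorem in the present notation.

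On the regularizer side I would argue as follows. By \textbf{H0} the space $(\mathcal{F},\tau)$ is locally convex and $\|\cdot\|_{\mathcal{F}}$ is $\tau$-coercive, so every sublevel set $S^{-}(\|\cdot\|_{\mathcal{F}},\alpha)$ is $\tau$-compact; combined with the convexity and positive homogeneity intrinsic to a norm, and with the $\tau$-lower semicontinuity of $\|\cdot\|_{\mathcal{F}}$ established in the paragraphs preceding the statement, this furnishes precisely the properties required of the regularizer in \cite{Bredies2020}. In particular $B_{\mathcal{F}} = S^{-}(\|\cdot\|_{\mathcal{F}},1)$ is convex and $\tau$-compact, hence non-empty, so its set of extreme points $\text{Ext}(B_{\mathcal{F}})$ is non-empty, which is what makes the extreme-point decomposition in \cite{Bredies2020} meaningful.

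On the data side, \textbf{H1} gives that $\mathcal{H}$ is a finite-dimensional Hilbert space with $\dim\mathcal{H}\le mN$, while $\mathcal{A}\colon\mu\mapsto(\phi(x_1)(\mu),\dots,\phi(x_N)(\mu))$ is linear. By \textbf{H2}, $\mathcal{L}$ is convex, proper and coercive in its second argument, hence $F$ is convex, proper and coercive on $\mathcal{H}$, and being a finite-valued convex function on a finite-dimensional space it is continuous, in particular lower semicontinuous; moreover the infimum in \eqref{atpmu} is finite, being bounded above by $F(0)$ (take $\mu = 0$) and below by $\inf_{\mathcal{H}}F > -\infty$ (coercive lower semicontinuous functions on finite-dimensional spaces attain their minimum). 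With these facts in place, Theorem~3.3 of \cite{Bredies2020} applies and produces a minimizer of \eqref{atpmu} of the form $\mu^{*}=\sum_{i=1}^{p}\gamma_i\mu_i$ with $p\le\dim\mathcal{H}\le mN$, $\gamma_i>0$, $\mu_i\in\text{Ext}(B_{\mathcal{F}})$ and $\|\mu^{*}\|_{\mathcal{F}}=\sum_{i=1}^{p}\gamma_i$, i.e.\ exactly \eqref{spasol}; by the preceding proposition, $f_{\mu^{*}}$ is then a solution of \eqref{atpB}.

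The delicate point — and the one I would single out as the main obstacle — is the compatibility of $\mathcal{A}$ with the topology $\tau$. Each $\phi(x_i)\in L_{\mathrm{op}}(\mathcal{F},\mathit{E})$ is continuous for the \emph{norm} of $\mathcal{F}$, but both the direct-method existence argument and Theorem~3.3 of \cite{Bredies2020} require $\mathcal{A}$ to be continuous from $(\mathcal{F},\tau)$ into $\mathcal{H}$, and this does not follow from \textbf{H0}--\textbf{H2} alone. In the canonical situation described before the statement, where $\mathcal{F}\cong\mathcal{K}'$ and $\tau=\sigma(\mathcal{K}',\mathcal{K})$, the required continuity holds precisely when each $\phi(x_i)$ is weak-$*$ continuous — equivalently, is the adjoint of a bounded operator into $\mathcal{K}$, using that $\mathit{E}$ is reflexive by \textbf{H1} — so one should either adopt this as a standing assumption on the configuration $(\mathcal{F},\phi)$ or restrict attention to configurations of that form. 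All remaining verifications ($\tau$-lower semicontinuity of the norm, continuity of $F$, finiteness of the infimum, closedness of $\text{range}\,\mathcal{A}$) are routine and essentially already contained in the discussion leading up to the theorem.
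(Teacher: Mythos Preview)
Your proposal is correct and follows exactly the paper's approach: the paper's own proof is a single sentence invoking Theorem~3.3 of \cite{Bredies2020} after the preparatory discussion, and you have simply spelled out that verification in detail. Your observation about the $\tau$-continuity of $\mathcal{A}$ is a legitimate subtlety that the paper leaves implicit in its ``previous comments''; it is not a defect in your argument but rather a point on which you are more careful than the source.
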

\begin{proof}
The proof of this theorem is based on the fact established from the previous comments and the assumed conditions \textbf{H0}, \textbf{H1}, and \textbf{H2}, that the problem \eqref{atpmu} can be formulated as a more general variational problem, and that in Theorem 3.3 of \cite{Bredies2020} it is shown to have a sparse solution in the form \eqref{spasol}.
\end{proof}

Based on Theorem \ref{TeoSpaSol}, we provide the following definition.
\begin{definition}
Let $\left\{\mu_1, \mu_2,\ldots, \mu_p\right\} \subset \text{Ext}\!\left(B_{\mathcal{F}}\right)$ be a finite subset of $p \leq nM$ extremal points of the unit ball, and let $\left\{\gamma_1, \gamma_2,\ldots, \gamma_p\right\} \subset \mathbb{R}^{+}$ be given. We call the family of functions of the form
\begin{align*}
f_{\mu^{*}} = \sum^{p}_{i = 1}\gamma_i f_{\mu_i}
\end{align*}
a \textbf{one-layer sparsity neural structure}. The functions $f_{\mu_i}$ are referred to as \textbf{structural neural units}.
\end{definition}

\subsection{Physically Modeled Neural Networks}
In this section, we present an example of a GRKBS. In this context, when an AbsNN incorporates a GRKBS of this type as part of the sequence of GRKBSs that constitute the abstract network, we refer to it as a \emph{Physically Modeled Abstract Neural Network}. 

We consider the following boundary value problem as a model for brain activity that incorporates the distribution of electric charges. Let $\Omega \subset \mathbb{R}^n$ be a region with smooth boundary $\partial \Omega$. 
\begin{align}\label{eqn8}
\left\{
\begin{array}{ll}
\mathcal{L}u = f & \text{in } \Omega, \\\\
\frac{\partial u}{\partial \mathbf{n}} = 0 & \text{on } \partial \Omega,
\end{array}
\right.
\end{align}
where the operator is defined by
\[
\mathcal{L} u = \mathrm{div}\!\left(k(y)\nabla u\right) - a(y)u,
\]
with $k \in C^1(\overline{\Omega})$, $a \in C(\overline{\Omega})$, and such that $k(y) \geq k_0 > 0$ and $a(y) > 0$ for all $y \in \Omega$.

A standard result states that for every $f \in L^2(\Omega)$, the boundary value problem \eqref{eqn8} admits a unique weak solution $u \in H^1(\Omega)$ satisfying
\begin{align*}
\|u\|_{H^1(\Omega)} \leq C \|f\|_{L^2(\Omega)},
\end{align*}
where $C$ is a constant independent of $f$.

Now, one can define a linear and bounded operator $\mathcal{K}: L^2(\Omega) \rightarrow H^1(\Omega)$ which assigns to each $f$ the unique weak solution of \eqref{eqn8}. Moreover, this operator is invertible as a map from $L^2(\Omega)$ onto $H^1(\Omega)$. When restricted to $H^1(\Omega)$, $\mathcal{K}$ is self-adjoint, positive, and compact.

In addition, the differential operator $\mathcal{L}$ admits a complete orthonormal system of eigenfunctions $\left\{\psi_i\right\}_{i=1}^{\infty}$ in $L^2(\Omega)$. For any function $f \in L^2(\Omega)$, we have the expansion
\begin{align*}
f = \sum_{i=1}^{\infty} \left(f, \psi_i\right)_{L^2(\Omega)} \psi_i,
\end{align*}
satisfying the identity
\begin{align*}
\|f\|^2_{L^2(\Omega)} = \sum_{i=1}^{\infty} \left(f, \psi_i\right)^2_{L^2(\Omega)}.
\end{align*}

Furthermore, one can define the projection operator $\mathcal{P}_m : H^1(\Omega) \rightarrow H^1(\Omega)$, which maps each $u \in H^1(\Omega)$ to
\[
\sum_{i=1}^{m} \left(u, \psi_i\right)_{L^2(\Omega)} \psi_i,
\]
and it can be shown that this operator is bounded with a bound independent of $m$. We denote by $E_m = \text{span}\left(\psi_1,\ldots,\psi_m\right) \subset H^1(\Omega)$.

Our next step is to define a GRKBS that we refer to as the \textbf{\textit{Sensory Encoding Projector}}, whose role is to map encoded signals received from the environment via multiple sensory modalities into a distribution of electric charges in the brain.

To this end, we define $\mathcal{B}_{\mathrm{Sensor}}$ as a GRKBS of vector-valued functions $f: \mathcal{X} \rightarrow L_2(\Omega)$, where $\mathcal{F}$ is the characteristic space and $\phi: \mathcal{X} \rightarrow L_{\text{op}}\!\left(\mathcal{F}, L_2(\Omega)\right)$ is the characteristic map, such that
\begin{align*}
\mathcal{B}_{\mathrm{Sensor}} &= \left\{f_{\mu} : \mu \in \mathcal{F} \right\}, \quad \text{with } f_{\mu}(x) = \phi(x)(\mu) \in L_2(\Omega), \\
\|f\|_{\mathcal{B}_{\mathrm{Sensor}}} &= \inf \left\{ \|\mu\|_{\mathcal{F}} : f = f_{\mu} \right\}, \quad f \in \mathcal{B}_{\mathrm{Sensor}}.
\end{align*}
The structure associated with $\mathcal{B}_{\mathrm{Sensor}}$ is described by the quadruple $\left(\mathcal{X}, L_2(\Omega), \mathcal{F}, \phi\right)$.

Now, starting from $\mathcal{B}_{\mathrm{Sensor}}$, we construct another GRKBS, denoted by $\mathcal{B}_{\mathrm{PMANN}}$ and referred to as the \textbf{\textit{Physically Modeled Abstract Neural Network}}. This is a GRKBS of functions $f: \mathcal{X} \rightarrow E_m$, with characteristic space $\mathcal{F}$ and characteristic map
\begin{align*}
&\widetilde{\phi} : \mathcal{X} \rightarrow L_{\text{op}}\!\left(\mathcal{F}, E_m\right), \\
&\widetilde{\phi}(x)(\mu) = \mathcal{P}_m \circ \mathcal{K}(\phi(x)(\mu)).
\end{align*}
It is clear that for each $x \in \mathcal{X}$, $\widetilde{\phi}(x)$ is a linear operator from $\mathcal{F}$ into $E_m$. It remains to show that $\widetilde{\phi}(x)$ is bounded. In fact, we have
\begin{align*}
\|\mathcal{P}_m \circ \mathcal{K}(\phi(x)(\mu))\|_{H^1(\Omega)} 
&\leq M \|\mathcal{K}(\phi(x)(\mu))\|_{H^1(\Omega)} \\
&\leq MC \|\phi(x)(\mu)\|_{L^2(\Omega)} \\
&\leq MC \|\phi(x)\|_{L_{\text{op}}(\mathcal{F}, L^2(\Omega))} \|\mu\|_{\mathcal{F}}.
\end{align*}
Thus, $\mathcal{B}_{\mathrm{PMANN}}$ defines a GRKBS with structure $\left(\mathcal{X}, E_m, \mathcal{F}, \widetilde{\phi}\right)$, where $E_m$ is a finite-dimensional Hilbert space of dimension $m$.

We have developed a GRKBS designed, to some extent, to replicate the way in which the brain assimilates, encodes, and responds to environmental stimuli. The input of information triggers physio-electromagnetic processes that activate neurons, which then propagate through synapses and ionic exchange between the interior and exterior of the neurons, thereby activating a wide region of the brain.

By applying Theorem~\ref{TeoSpaSol} to $\mathcal{B}_{\mathrm{PMANN}}$, one obtains, as a sparse solution of the ATP, a sparse neural structure that is already computable. Therefore, this structure can be regarded as a layer that may be integrated with other types of layers to build deep architectures intended for the training of artificial intelligence models.

Finally, we state the following result. 
\begin{theorem}
Let $\{(x_i, y_i)\}_{i = 1}^N \subset \mathcal{X}\times E_m$ be a training dataset, and let $L: E_m\times E_m \rightarrow \mathbb{R}$ be a loss function satisfying the conditions of Theorem~\ref{TeoSpaSol}. Then the abstract training problem 
\begin{align*}
\inf_{f \in \mathcal{B}_{\mathrm{PMANN}}}\left(\frac{1}{N}\sum^{N}_{i = 1} L(y_i, f(x_i)) + \|f\|_{\mathcal{B}_{\mathrm{PMANN}}}\right) 
\end{align*}
admits a solution $f^{*}$ of the form
\begin{align*}
f^{*}(x) = \sum^p_{i = 1}\gamma_i \,\mathcal{P}_{m}\circ \mathcal{K}(\phi(x)(\mu_i)),
\end{align*}
where $p \leq mN$, $\gamma_i > 0$ with $\|f^{*}\|_{\mathcal{F}} = \sum^p_{i = 1}\gamma_i$, and $\mu_i \in \text{Ext}\!\left(B_{\mathcal{F}}\right)$, where $B_{\mathcal{F}}$ is the closed unit ball in $\mathcal{F}$ and $\text{Ext}\!\left(B_{\mathcal{F}}\right)$ denotes its set of extreme points.
\end{theorem}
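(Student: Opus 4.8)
The plan is to reduce the statement to a direct application of Theorem~\ref{TeoSpaSol} to the GRKBS $\mathcal{B}_{\mathrm{PMANN}}$ equipped with the configuration $(\mathcal{F}, \widetilde{\phi})$ constructed above, followed by an elementary computation that exploits the explicit form $\widetilde{\phi}(x)(\mu) = \mathcal{P}_m \circ \mathcal{K}(\phi(x)(\mu))$ of the characteristic map.

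First I would check that the hypotheses \textbf{H0}, \textbf{H1}, \textbf{H2} of Theorem~\ref{TeoSpaSol} are met for $\mathcal{B}_{\mathrm{PMANN}}$. Hypothesis \textbf{H1} is immediate, since $E_m = \mathrm{span}(\psi_1,\ldots,\psi_m)$ is a finite-dimensional Hilbert subspace of $H^1(\Omega)$ of dimension $m$. Hypothesis \textbf{H2} is exactly the assumption imposed on $L$ in the statement. Hypothesis \textbf{H0} is inherited from the configuration space $\mathcal{F}$ of $\mathcal{B}_{\mathrm{Sensor}}$, which is the same $\mathcal{F}$ underlying $\mathcal{B}_{\mathrm{PMANN}}$: as in the discussion preceding Theorem~\ref{TeoSpaSol}, one takes $\mathcal{F}$ to be isometrically a dual space $\mathcal{K}'$ and $\tau = \sigma(\mathcal{K}', \mathcal{K})$ the weak* topology, so that $(\mathcal{F},\tau)$ is locally convex and, by the Banach--Alaoglu--Bourbaki theorem, $\|\cdot\|_{\mathcal{F}}$ is $\tau$-coercive and $\tau$-lower semicontinuous. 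I would also observe that the operator $\mathcal{A}: \mathcal{F} \rightarrow E_m \times \cdots \times E_m$ sending $\mu$ to $\bigl(\widetilde{\phi}(x_1)(\mu),\ldots,\widetilde{\phi}(x_N)(\mu)\bigr)$ automatically has closed range, its codomain being finite-dimensional.

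Next, I would invoke Theorem~\ref{TeoSpaSol} to obtain a sparse minimizer $\mu^{*} = \sum_{i=1}^{p}\gamma_i \mu_i \in \mathcal{F}$ of the problem \eqref{atpmu} associated with $(\mathcal{F}, \widetilde{\phi})$ and the training data, with $p \leq mN$, $\gamma_i > 0$, $\|\mu^{*}\|_{\mathcal{F}} = \sum_{i=1}^{p}\gamma_i$, and $\mu_i \in \mathrm{Ext}(B_{\mathcal{F}})$. By the proposition relating the ATP on a GRKBS to the ATP on its configuration space, the function $f^{*} := f_{\mu^{*}}$ solves the ATP posed on $\mathcal{B}_{\mathrm{PMANN}}$; moreover the argument in that proof gives $\|f^{*}\|_{\mathcal{B}_{\mathrm{PMANN}}} = \|\mu^{*}\|_{\mathcal{F}} = \sum_{i=1}^{p}\gamma_i$. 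Finally, using the linearity of $\widetilde{\phi}(x)$ together with its explicit description, for every $x \in \mathcal{X}$ one has
\begin{align*}
f^{*}(x) = \widetilde{\phi}(x)(\mu^{*}) = \widetilde{\phi}(x)\Bigl(\sum_{i=1}^{p}\gamma_i \mu_i\Bigr) = \sum_{i=1}^{p}\gamma_i\, \widetilde{\phi}(x)(\mu_i) = \sum_{i=1}^{p}\gamma_i\, \mathcal{P}_m \circ \mathcal{K}\bigl(\phi(x)(\mu_i)\bigr),
\end{align*}
which is precisely the claimed representation.

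The only genuinely non-formal point, and hence the main obstacle, is the verification of \textbf{H0} for $\mathcal{F}$: one must ensure that the configuration space of $\mathcal{B}_{\mathrm{Sensor}}$ carries a locally convex topology making its norm coercive and lower semicontinuous (which is why the dual-space model for $\mathcal{F}$ should be recorded as a standing assumption on the construction). Once this structural hypothesis on $\mathcal{F}$ is granted, the remainder is a transcription of Theorem~\ref{TeoSpaSol} through the characteristic isometry of $\mathcal{B}_{\mathrm{PMANN}}$ together with the one-line linearity computation displayed above.
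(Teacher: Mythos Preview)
Your proposal is correct and follows exactly the approach the paper intends: the paper itself offers no separate proof for this theorem, merely remarking beforehand that ``By applying Theorem~\ref{TeoSpaSol} to $\mathcal{B}_{\mathrm{PMANN}}$, one obtains, as a sparse solution of the ATP, a sparse neural structure that is already computable,'' and then stating the result. Your write-up is in fact more detailed than the paper's, since you explicitly verify \textbf{H0}--\textbf{H2}, invoke the proposition linking \eqref{atpB} and \eqref{atpmu}, and unwind the definition of $\widetilde{\phi}$; your caveat that \textbf{H0} must be imposed as a standing structural assumption on $\mathcal{F}$ is also well taken, as the paper does not verify it for the unspecified configuration space of $\mathcal{B}_{\mathrm{Sensor}}$.
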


\subsection*{Declaration of Generative AI and AI-assisted technologies in the writing process}

During the preparation of this work the authors used \textit{ChatGPT} in order to improve the readability and language of the manuscript. After using this tool, the authors reviewed and edited the content as needed and take full responsibility for the final version of the publication.

\bibliographystyle{unsrtnat}

\bibliography{bib}

\end{document}